\newtheorem{corollary}{Corollary}[section]
\newtheorem{lemma}[corollary]{Lemma}
\newtheorem{proposition}[corollary]{Proposition}
\newtheorem{remark}[corollary]{Remark}
\newtheorem{theorem}[corollary]{Theorem}
\newfont{\sBlackboard}{msbm10 scaled 900}
\newcommand{\mylabel}[1]{\label{#1}
            \ifx\undefined\stillediting
            \else \fbox{$#1$}\fi }
\newcommand{\BE}{\begin{equation}}
\newcommand{\EEQ}{\end{equation}}
\newcommand{\rfb}[1]{\mbox{\rm
   (\ref{#1})}\ifx\undefined\stillediting\else:\fbox{$#1$}\fi}
\newfont{\Blackboard}{msbm10 scaled 1200}
\newfont{\roma}{cmr10 scaled 1200}
\def\CC{\rm \hbox{C\kern-.56em\raise.4ex
         \hbox{$\scriptscriptstyle |$}\kern+0.5 em }}
\def\Lim{\displaystyle\lim}
\def\Inf{\displaystyle\inf}
\def\Sum{\displaystyle\sum}
\def\Frac{\displaystyle\frac}
\def\Int{\displaystyle\int}
\def\n{|\kern -.05cm{|}\kern -.05cm{|}}
\def\R{{\bf \hbox{\sc I\hskip -2pt R}}} 
\def\N{\rm I\hskip -2pt N} 
\newcommand{\mm}    {{\hbox{\hskip 0.5pt}}}
\newcommand{\bluff} {{\hbox{\raise 15pt \hbox{\mm}}}}
\def\df{\begin{flushright}$\Box$\end{flushright}}
\newcommand{\e}      {{\varepsilon}}
\def\section{\@startsection {section}{1}{\z@}{-3.5ex plus -1ex minus
    -.2ex}{2.3ex plus .2ex}{\large\bf}}
\def\be{\begin{equation}}
\def\ee{\end{equation}}
\date{ }
\begin{document}\thispagestyle{empty}
\title{\bf Pointwise controllability as limit of internal controllability for the beam equation}\maketitle

\begin{tabular}{c}
\sc Mama Abdelli $^{1}$, Akram Ben Aissa $^{2}$  \ \\
\small $^{1}$ Universit\'e Djillali Liab\'es,
\small   Laboratoire de Math\'ematiques,\qquad \\
\small B. P. 89, Sidi Bel Abb\'es 22000, ALGERIA.  \\e-mail:
abdelli.mama@gmail.com\\ \small $^{2}$ D\'epartement de
Math\'ematiques, Facult\'e des Sciences de
Monastir,\qquad  \\
\small 5019 Monastir, TUNISIA.\\e-mail: akram.benaissa@fsm.rnu.tn \\

\end{tabular}

\vskip20pt

 \renewcommand{\abstractname} {\bf Abstract}
 \begin{abstract}
This work is devoted to prove the pointwise controllability of the
  Bernoulli-Euler beam equation. It is obtained as a limit of internal controllability of  the same type of equation.
Our approach is based on the techniques used in {\bf\cite{FP1}}.
\end{abstract}
\bigskip\noindent
\\

{\small \noindent {\bf Keywords}: Beam equation, Internal control,
Exact controllability, Pointwise control.\\
{\small \noindent {\bf AMS Subject Classifications}: 35E15, 93D05, 93B07, 93D20.
\section {Introduction}
In this paper, we are interested in  the passage from internal  exact controllability of beam equation to pointwise exact controllability.
We consider the following initial
and boundary value problem
\begin{equation}\label{p1}
\Frac{\partial ^2 u}{\partial
t^2}(x,t)+\Frac{\partial^{4}u}{\partial x^4}(x,t)= g_n(x,t),\,\,\,
0<x<1,\,\,\,\, t>0,
\end{equation}
\begin{equation}\label{p2}
u(0,t)= \Frac{\partial u}{\partial x}(1,t)= \Frac{\partial^2
u}{\partial x^2}(0,t)= \Frac{\partial^3 u}{\partial
x^3}(1,t)=0,\quad t>0,
\end{equation}
\begin{equation}\label{p3}
u(x,0)=u^{0}(x),\,\,\,\,\ \Frac{\partial u}{\partial
t}(x,0)=u^{1}(x),\,\,\ 0<x<1,\end{equation} where $g_n,\,\,u^0,\,\,
u^1$ are in suitable spaces with
$\mathrm{supp}\,(g_n)=[\xi,\xi+\frac{1}{n}]$, $n\in\mathbb{N}^*$ and
$\xi \in (0,1)$.\\
 Here $u$ denotes the transverse displacement of
the beam, we suppose that the length of the beam is equal to $1$ and
the control depends on a parameter $n\in\N^*$. Recall that this
model describes the transversal vibrations of the Bernoulli-Euler
beam.

The problem of internal exact  controllability  was studied by Haraux {\bf\cite{HA1}}, Jaffard {\bf\cite{ja}} and Lions {\bf\cite{lio}}. The pointwise exact controllability for a strategic point was studied by Haraux and Jaffard   {\bf\cite{HA2}} and Lions {\bf\cite{lio}}. However, the convergence of the internal exact controllability of equation (\ref{p1})-(\ref{p3}) to the pointwise exact controllability has apparently not yet been studied.

The aim of this paper is to describe what happens when $n$ tends to
infinity, we can't hope to get a pointwise control for the limit
problem for any strategic  point in $(0,1)$,
 we use the same  techniques introduced in
{\bf\cite{FP1}}.

Our purpose in this paper is to  prove  the pointwise controllability of the
  Bernoulli-Euler beam equation. It is obtained as a limit of internal controllability of  the same type of equation.
Our approach based on the techniques used in {\bf\cite{FP1}}. This result can be proved by the standard HUM method
(Hilbert uniqueness method) by J.L. Lions {\bf\cite{lio}}. As n tends to infinity, we obtain the
solution of an exact internal controllability problem which converges towards to the
solution of an exact pointwise controllability  problem.

The plan of the paper is as follows. In section $2$ we show the regularity of weak solutions
of problem (\ref{p1})-(\ref{p3}) for a strategic point in $(0,1)$
and we  study the behavior of these solutions in an interval of length  $\frac{1}{n}$.
The  exact controllability results are given in section $3$ . In section $4$ we prove an
inverse inequality which will give us the estimates on the internal
controls in the case of  a strategic point. Finally, in section
$5$ as  n tends to infinity we prove that the pointwise
exact controllability problem is obtained as limit of exact internal controllability
problem of the beam equation.
 \section {Estimation and regularity results near a point}
Now introducing the Hilbert spaces
$$
V=\{u\in H^2(0,1),\,\,\, u(0)=0,\,\ \frac{d u}{d
x}(1)= 0\}.
$$
$V'$ is the dual space of $V$ with respect to the pivot space
$L^2(0,1)$, where the duality is in the sense of $L^2(0,1)$.\\
and
$${\mathcal D}(\partial_x^4) = \left\{u\in H^4(0,1),\,\,\, u(0)=\frac{d u}{d x}(1)=0,\,\,\,
 \frac{d^2 u}{d x^2}(0)=\frac{d^3 u}{d x^3}(1)=0\right\}.$$
 Consider two given functions $(u^0,u^1)$ in $
L^2(0,1)\times V'$,\,\ $g_n\in L^2(0,T,L^2(0,1))$ and
$\mathrm{supp}\,(g_n)=[\xi,\xi+\frac{1}{n}]$,\,\ $\xi \in (0,1)$ and
 we will take $\frac{1}{n} < 1-\xi$. Let
$u$ be the solution of (\ref{p1})-(\ref{p3}).
\begin{proposition}\label{pro}
Assume $g_n \in L^2(0,T,L^2(0,1))$ and $(u^0,u^1)$ in  $
L^2(0,1)\times V'$. Then for any $T> 0$, problem
(\ref{p1})-(\ref{p3}) admits a unique solution
$$
u  \in{\mathcal C}\left(0,T,L^2(0,1)\right) \cap {\mathcal
C}^1\left(0,T,V'\right).
$$
Moreover,
\begin{equation}\label{mi5}
u(\xi,t)\in L^2(0,T),
\end{equation}
and there exists a constant $C> 0$ (independent on $n$ and T), such that
\begin{equation}\label{mama}
n\Int_{\xi}^{\xi+\frac{1}{n}}\Int_0^T
\Big|u(x,t)\Big|^2\,dx\,dt
 \leq
C\Big(\|g_n\|^2_{L^{2}(0,T,L^2(0,1))} + \|u^0\|^2_{L^2(0,1)} +
\|u^1\|^2_{V'}\Big),
\end{equation}
for all $n>0$.
\end{proposition}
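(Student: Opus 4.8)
The plan is to work in the spectral basis of the positive self-adjoint operator $A=\partial_x^4$ on $L^2(0,1)$ with domain $\mathcal{D}(\partial_x^4)$. Since $A$ has compact resolvent there is an $L^2$-orthonormal basis of eigenfunctions $\{\phi_k\}_{k\geq 1}$ with $A\phi_k=\lambda_k\phi_k$, $0<\lambda_1<\lambda_2<\cdots$, and I set $\omega_k=\sqrt{\lambda_k}$. Two facts about this basis do all the work: first, solving $\phi''''=\lambda\phi$ under (\ref{p2}) gives $\omega_k=\mu_k^2$ with $\mu_k\sim ck$, so the time-frequencies satisfy a gap condition with $\omega_{k+1}-\omega_k\to+\infty$; second, since each $\phi_k$ is an explicit normalized combination of $\sin,\cos,\sinh,\cosh$, one has a uniform bound $\sup_k\sup_{x\in[0,1]}|\phi_k(x)|=:M<\infty$. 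Identifying $V=\mathcal{D}(A^{1/2})$ and $V'=\mathcal{D}(A^{-1/2})$, so that $\|v\|_{L^2}^2=\sum_k v_k^2$ and $\|w\|_{V'}^2=\sum_k \omega_k^{-2}w_k^2$, the operator $\mathcal{A}=\left(\begin{smallmatrix}0&I\\-A&0\end{smallmatrix}\right)$ generates a group on $L^2\times V'$ that is unitary for the conserved norm $\|u\|_{L^2}^2+\|u_t\|_{V'}^2$. Writing $u=\sum_k u_k(t)\phi_k$ reduces (\ref{p1})--(\ref{p3}) to the decoupled ODEs $u_k''+\lambda_k u_k=g_{n,k}(t)$, $g_{n,k}=\int_0^1 g_n\phi_k\,dx$, whose Duhamel solution yields $u\in\mathcal{C}(0,T;L^2)\cap\mathcal{C}^1(0,T;V')$ together with the energy bound; this settles existence, uniqueness and regularity.

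The essential observation for (\ref{mama}) is that the weight $n$ and the support length $\tfrac1n$ are dual to each other, so it suffices to prove a pointwise trace estimate that is \emph{uniform in the observation point}. Concretely, I will establish that for every $y\in(0,1)$ the trace $t\mapsto u(y,t)$ lies in $L^2(0,T)$ and
\[
\int_0^T|u(y,t)|^2\,dt\leq C\big(\|g_n\|_{L^2(0,T;L^2)}^2+\|u^0\|_{L^2}^2+\|u^1\|_{V'}^2\big),
\]
with $C$ independent of $y$ and of $n$. Granting this, (\ref{mi5}) is the case $y=\xi$, while integrating over $y\in[\xi,\xi+\tfrac1n]$ and multiplying by $n$ gives $n\int_\xi^{\xi+\frac1n}\int_0^T|u(x,t)|^2\,dt\,dx\leq n\cdot\tfrac1n\cdot C(\cdots)=C(\cdots)$, which is exactly (\ref{mama}) after Fubini. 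Thus the whole proposition reduces to one uniform hidden-regularity estimate for the point trace.

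To prove it I split $u=u^{\mathrm{hom}}+u^{\mathrm{inh}}$. For the homogeneous part, $u^{\mathrm{hom}}(y,t)=\sum_k\big(a_k\cos\omega_k t+\tfrac{b_k}{\omega_k}\sin\omega_k t\big)\phi_k(y)$, where $a_k,b_k$ are the coefficients of $u^0,u^1$. Because the frequencies $\{\pm\omega_k\}$ obey a gap condition tending to infinity, Ingham's inequality applies to this non-harmonic series and gives
\[
\int_0^T|u^{\mathrm{hom}}(y,t)|^2\,dt\leq C\sum_k\Big(|a_k|^2+\frac{|b_k|^2}{\omega_k^2}\Big)|\phi_k(y)|^2\leq CM^2\big(\|u^0\|_{L^2}^2+\|u^1\|_{V'}^2\big),
\]
where the last step uses the uniform eigenfunction bound and is thus independent of $y$. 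For the source term, Duhamel's formula reads $(u^{\mathrm{inh}},u^{\mathrm{inh}}_t)(t)=\int_0^t S(t-s)(0,g_n(\cdot,s))\,ds$; applying the homogeneous trace estimate to each frozen datum $(0,g_n(\cdot,s))$, whose $V'$-norm is controlled by $\|g_n(\cdot,s)\|_{L^2}$ through Parseval (uniformly in the support location, hence uniformly in $n$), and using Minkowski's integral inequality in $s$, yields the $\|g_n\|_{L^2(0,T;L^2)}$ bound, again uniformly in $y$. Summing the two contributions proves the uniform trace estimate.

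The main obstacle is this hidden-regularity step, and within it two points deserve care. The first is structural: Ingham's inequality is what converts the $L^2(0,T)$ norm of the non-harmonic series into a square-summable coefficient norm, so one must verify the spectral gap for the specific mixed conditions (\ref{p2}) and the uniform bound $M$ for the eigenfunctions; it is precisely that $x$-uniform bound which renders the constant $n$-independent once we integrate over the shrinking support, and it is the Parseval identity $\sum_k|g_{n,k}(t)|^2=\|g_n(\cdot,t)\|_{L^2}^2$ — insensitive to where $[\xi,\xi+\tfrac1n]$ sits — that secures $n$-independence for the source term. The second is bookkeeping in $T$: the upper Ingham constant and the Minkowski step each contribute a factor controlled on $[0,T]$, and one must check that this does not spoil the stated uniformity; the decisive and delicate uniformity, however, is the one in $n$, and it is guaranteed entirely by the $n$-independent eigenfunction bound together with the Parseval identity above.
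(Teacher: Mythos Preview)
Your argument is correct and in fact more complete than the paper's at the key step. Both proofs expand in the eigenbasis $\phi_m(x)=\sin\bigl(\tfrac{2m+1}{2}\pi x\bigr)$ (for these mixed conditions the $\sinh/\cosh$ components vanish, so your uniform bound is simply $M=\sqrt{2}$). The paper, however, integrates first in $x$ over the whole interval $(0,1)$, uses the spatial orthogonality of the $\phi_m$ together with elementary H\"older/Young estimates on the time coefficients, and arrives at
\[
\int_0^1\!\!\int_0^T u^2\,dt\,dx\le C(T)\bigl(\|u^0\|_{L^2}^2+\|u^1\|_{V'}^2+\|g_n\|_{L^2(0,T;L^2)}^2\bigr);
\]
it then passes directly to the same bound on $[\xi,\xi+\tfrac1n]$ with an extra factor $1/n$, a step that is not justified by the global $x$-orthogonality argument alone. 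Your route integrates first in $t$: since the time-frequencies $\omega_m=\bigl(\tfrac{2m+1}{2}\pi\bigr)^2$ satisfy $\omega_{m+1}-\omega_m=2\pi^2(m+1)\ge 2\pi^2$, the upper Ingham inequality applies for every $T>0$ and yields a trace estimate $\int_0^T|u(y,t)|^2\,dt\le C(T)(\cdots)$ that is \emph{uniform in $y$} thanks to $|\phi_m(y)|\le M$; integrating in $y$ over an interval of length $1/n$ then gives exactly the missing $1/n$. So what your approach buys is precisely the passage the paper glosses over, at the price of invoking Ingham instead of purely elementary inequalities. One final remark: neither argument produces a constant independent of $T$---the paper itself writes $C(T)$ throughout its proof despite the wording of the statement---so your caveat on this point is well placed; the $n$-uniformity, which is the substantive claim, is fully secured by your argument.
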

\begin{proof}
In order to prove (\ref{mama}) we put
$$
u^0(x)=\sum_{m=0}^\infty a_m\sin\Big(\frac{2m+1}{2}\pi x\Big),\,\,\,\,
u^1(x)=\sum_{m=0}^\infty \frac{b_m}{(\frac{2m+1}{2}\pi)^2}\sin\Big(\frac{2m+1}{2}\pi x\Big),
$$
and
$$
g_n(x,t)=\sum_{m=0}^\infty g_m(t)\sin\Big(\frac{2m+1}{2}\pi x\Big),
$$
with $(a_m), \Big(\frac{b_m}{(\frac{2m+1}{2}\pi)^2}\Big)\in l^2(\R)$ and for $t$ fixed $(g_m(t))\in l^2(\R)$.\\
\\
\\
The solution of (\ref{p1})-(\ref{p3}) is given by
{\small\begin{equation}\label{CV}
\begin{split}
u(x,t)&=\sum_{m=0}^\infty\left\{ a_m\cos\Big[\Big(\frac{2m+1}{2}\pi\Big)^2t\Big] +
\frac{b_m}{(\frac{2m+1}{2}\pi)^4}\sin\Big[\Big(\frac{2m+1}{2}\pi\Big)^2t\Big]\right.\\&+\left.
\frac{1}{(\frac{2m+1}{2}\pi)^4}\Int_0^t
\sin\Big[\Big(\frac{2m+1}{2}\pi\Big)^2(t-s)\Big]g_m(s)\,ds\right\}\sin\Big(\frac{2m+1}{2}\pi x\Big).
\end{split}
\end{equation}}
Which implies that
{\small\begin{equation*}
\begin{split}
u(\xi,t)&=\sum_{m=0}^\infty\left\{ a_m\cos\Big[\Big(\frac{2m+1}{2}\pi\Big)^2t\Big] +
\frac{b_m}{(\frac{2m+1}{2}\pi)^4}\sin\Big[\Big(\frac{2m+1}{2}\pi\Big)^2t\Big]\right.\\&+\left.
\frac{1}{(\frac{2m+1}{2}\pi)^4}\Int_0^t
\sin\Big[\Big(\frac{2m+1}{2}\pi\Big)^2(t-s)\Big]g_m(s)\,ds\right\}\sin\Big(\frac{2m+1}{2}\pi \xi\Big).
\end{split}
\end{equation*}}
We see that
$$
\|u^0\|^2_{L^2(0,1)}+ \|u^1\|^2_{V'}= \frac{1}{2}\sum_{m=0}^\infty\Big[a_m^2+
\frac{b_m^2}{(\frac{2m+1}{2}\pi)^8}\Big],
$$
and
$$
\|g_n\|^2_{L^2(0,1)}=\frac{1}{2}\sum_{m=0}^\infty g^2_m(t).
$$
Integrating (\ref{CV}) over $(0,1)$, we get
{\small\begin{equation}\label{BBN}
\begin{split}
\Int_0^1u^2(x,t)\,dx&=\Frac{1}{2}\sum_{m=0}^\infty\left\{a_m^2\cos^2\Big[\Big(\frac{2m+1}{2}\pi\Big)^2t\Big]+
\frac{b^2_m}{(\frac{2m+1}{2}\pi)^8}\sin^2\Big[\Big(\frac{2m+1}{2}\pi\Big)^2t\Big]\right.\\&+\left.
\frac{1}{(\frac{2m+1}{2}\pi)^8}\Big(\Int_0^t
\sin\Big[\Big(\frac{2m+1}{2}\pi\Big)^2(t-s)\Big]g_m(s)\,ds\Big)^2\right.\\&+\left.2a_m \frac{b_m}{(\frac{2m+1}{2}\pi)^4}\cos\Big[\Big(\frac{2m+1}{2}\pi\Big)^2t\Big]\sin \Big[\Big(\frac{2m+1}{2}\pi\Big)^2t\Big]
\right.\\&+\left.2\frac{b_m}{(\frac{2m+1}{2}\pi)^8}\sin \Big[\Big(\frac{2m+1}{2}\pi\Big)^2t\Big]\Int_0^t
\sin\Big[\Big(\frac{2m+1}{2}\pi\Big)^2(t-s)\Big]g_m(s)\,ds
\right.\\&+\left.\frac{2a_m}{(\frac{2m+1}{2}\pi)^4} \cos \Big[\Big(\frac{2m+1}{2}\pi\Big)^2t\Big]\Int_0^t
\sin\Big[\Big(\frac{2m+1}{2}\pi\Big)^2(t-s)\Big]g_m(s)\,ds
\right\}
\end{split}
\end{equation}}
we shall estimate the third term of right hand side of (\ref{BBN}), we used H\"{o}lder inequality, we get
{\small\begin{equation}\label{BBN1}
\begin{split}
\sum_{m=0}^\infty \Frac{1}{(\frac{2m+1}{2}\pi)^8}\Big(\Int_0^t
\sin\Big[\Big(\frac{2m+1}{2}\pi\Big)^2(t-s)\Big]g_m(s)\,ds\Big)^2
&\leq \sum_{m=0}^\infty \Int_0^t \sin^2\Big[\Big(\frac{2m+1}{2}\pi\Big)^2(t-s)\Big]\,ds \Int_0^t g^2_m(s)\,ds
\\&\leq C(T)\sum_{m=0}^\infty \Int_0^Tg^2_m(t)\,dt.
\end{split}
\end{equation}}
By Young's inequality, we get
{\small\begin{equation}\label{BBN2}
\begin{split}
\sum_{m=0}^\infty
2a_m \frac{b_m}{(\frac{2m+1}{2}\pi)^4}\cos\Big[\Big(\frac{2m+1}{2}\pi\Big)^2t\Big]\sin \Big[\Big(\frac{2m+1}{2}\pi\Big)^2t\Big]
\leq c_1\sum_{m=0}^\infty a_m^2 +  c'_1\sum_{m=0}^\infty \frac{b^2_m}{(\frac{2m+1}{2}\pi)^8},
\end{split}
\end{equation}}
and  using Young's and H\"{o}lder inequalities, we have
{\small\begin{equation}\label{BBN3}
\begin{split}
\sum_{m=0}^\infty 2\frac{b_m}{(\frac{2m+1}{2}\pi)^8}\sin \Big[\Big(\frac{2m+1}{2}\pi\Big)^2t\Big]&
\Int_0^t\sin\Big[\Big(\frac{2m+1}{2}\pi\Big)^2(t-s)\Big]g_m(s)\,ds
\\&\leq C(T)\sum_{m=0}^\infty \frac{b_m}{(\frac{2m+1}{2}\pi)^8}\Big(\Int_0^T g^2_m(t)\,dt\Big)^{\frac{1}{2}}
\\&\leq
 c_2 \sum_{m=0}^\infty\frac{b^2_m}{(\frac{2m+1}{2}\pi)^{16}} +
  c'_2 \sum_{m=0}^\infty \Int_0^T g^2_m(t)\,dt
  \\& \leq
 c_2 \sum_{m=0}^\infty\frac{b^2_m}{(\frac{2m+1}{2}\pi)^8} +
  C(T)\sum_{m=0}^\infty \Int_0^T g^2_m(t)\,dt,
\end{split}
\end{equation}}
and
{\small\begin{equation}\label{BBN4}
\begin{split}
\sum_{m=0}^\infty \frac{2a_m}{(\frac{2m+1}{2}\pi)^4}\cos \Big[\Big(\frac{2m+1}{2}\pi\Big)^2t\Big]&\Int_0^t
\sin\Big[\Big(\frac{2m+1}{2}\pi\Big)^2(t-s)\Big]g_m(s)\,ds\\&\leq
 c_3 \sum_{m=0}^\infty a_m^2 +
  C(T)\sum_{m=0}^\infty \Int_0^T g^2_m(t)\,dt.
\end{split}
\end{equation}}
Integrating (\ref{BBN}) in $(0,T)$ and using (\ref{BBN1})-(\ref{BBN4}), we obtain from (\ref{BBN}) that
{\small\begin{equation*}
\begin{split}
\Int_0^1 \Int_0^Tu^2(x,t)\,dt\,dx&\leq \frac{C(T)}{2}\sum_{m=0}^\infty\left\{a_m^2+\frac{b_m^2}{(\frac{2m+1}{2}\pi)^8}
+\Int_0^Tg^2_m(t)\,dt\right\}.
\end{split}
\end{equation*}}
Then
{\small\begin{equation*}
\begin{split}
\Int_\xi^{\xi+\frac{1}{n}}\Int_0^Tu^2(x,t)\,dt\,dx \leq \frac{C(T)}{n}\Big(\|u^0\|^2_{L^2(0,1)} + \|u^1\|^2_{V'}
+\|g_n\|^2_{L^2(0,T,L^2(0,1))}\Big).
\end{split}
\end{equation*}}
This completes the proof of proposition \ref{pro}.\end{proof}
\section{Internal exact controllability of the beams
equation} We consider now the following homogenous problem
\begin{equation}\label{p4o}
\Frac{\partial ^2 \phi}{\partial
t^2}(x,t)+\Frac{\partial^{4}\phi}{\partial x^4}(x,t)= 0,\,\,\,
0<x<1,\,\,\,\ t>0,
\end{equation}
\begin{equation}\label{p5o}
\phi(0,t)= \Frac{\partial \phi}{\partial x}(1,t)= \Frac{\partial^2
\phi}{\partial x^2}(0,t)= \Frac{\partial^3 \phi}{\partial
x^3}(1,t)=0,
\end{equation}
\begin{equation}\label{p6o}
\phi(x,0)=\phi^{0}(x),\,\,\,\,\ \Frac{\partial \phi}{\partial
t}(x,0)=\phi^{1}(x),\,\,\ 0<x<1.\end{equation} where $(\phi^0,
\phi^1) \in L^2(0,1) \times V'$.
\begin{lemma}\label{lem2}
Let $\xi \in (0,1)$, then for any natural integer $m$ we have
{\small\begin{equation}\label{tu111}
\begin{split} \Inf_{m\geq 0} \Int_{\xi}^{\xi
+\frac{1}{n}}\sin^2\left(\frac{2m+1}{2}\pi x\right)\,dx &\geq
\frac{1}{2n} - \Frac{1}{\pi}\sin\Big( \frac{\pi}{2n}\Big)
\\& \geq c_{\pi}o\Big(\frac{1}{n^3}\Big).
\end{split}
\end{equation}}
\end{lemma}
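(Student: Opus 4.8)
The plan is to evaluate the integral explicitly and then reduce the infimum over $m$ to the single value $m=0$ by means of a classical trigonometric inequality. First I would write $\sin^2\theta=\tfrac12(1-\cos 2\theta)$ and integrate, so that with $\omega_m=\tfrac{2m+1}{2}\pi$ and the product formula $\sin A-\sin B=2\cos\frac{A+B}{2}\sin\frac{A-B}{2}$ applied to the boundary terms one gets
$$\int_\xi^{\xi+\frac1n}\sin^2\left(\frac{2m+1}{2}\pi x\right)dx=\frac{1}{2n}-\frac{1}{(2m+1)\pi}\cos\!\left(\frac{(2m+1)\pi}{2}\Big(2\xi+\frac1n\Big)\right)\sin\!\left(\frac{(2m+1)\pi}{2n}\right).$$
Since $\xi$ is fixed and the cosine factor cannot be controlled, I would simply bound it by $1$ in absolute value, which yields the uniform lower bound
$$\int_\xi^{\xi+\frac1n}\sin^2\left(\frac{2m+1}{2}\pi x\right)dx\geq\frac{1}{2n}-\frac{1}{(2m+1)\pi}\left|\sin\!\left(\frac{(2m+1)\pi}{2n}\right)\right|.$$

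The key step, and the real content of the lemma, is to show that the subtracted term is largest at $m=0$. For this I would invoke the elementary inequality $|\sin(k\theta)|\le k\,|\sin\theta|$, valid for every non-negative integer $k$ and every real $\theta$; it follows at once by induction from $\sin((k+1)\theta)=\sin k\theta\cos\theta+\cos k\theta\sin\theta$ together with the triangle inequality. Applying it with $k=2m+1$ and $\theta=\frac{\pi}{2n}$, and noting $\frac{\pi}{2n}\in(0,\pi)$ so that $\sin\frac{\pi}{2n}>0$, gives
$$\frac{1}{(2m+1)\pi}\left|\sin\!\left(\frac{(2m+1)\pi}{2n}\right)\right|\leq\frac{1}{\pi}\sin\!\left(\frac{\pi}{2n}\right).$$
Combined with the previous display this establishes the first inequality of $(\ref{tu111})$ uniformly in $m$, hence for the infimum.

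For the second inequality I would pass to the Taylor expansion of the sine. Writing $x=\frac{\pi}{2n}$ and using $x-\sin x=\frac{x^3}{6}-\frac{x^5}{120}+\cdots$, one finds
$$\frac{1}{2n}-\frac{1}{\pi}\sin\!\left(\frac{\pi}{2n}\right)=\frac{1}{\pi}\Big(\frac{\pi}{2n}-\sin\frac{\pi}{2n}\Big)=\frac{\pi^2}{48\,n^3}+O\!\left(\frac{1}{n^5}\right),$$
which is strictly positive (already from $\sin x<x$ for $x>0$) and of order $n^{-3}$, with leading constant $c_\pi=\frac{\pi^2}{48}$. This delivers the claimed lower bound of order $\frac{1}{n^3}$.

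The only genuinely non-routine point is recognizing that the worst case of the infimum occurs at $m=0$; everything then hinges on the inequality $|\sin k\theta|\le k|\sin\theta|$, which converts the $m$-dependent estimate into the clean expression $\frac{1}{\pi}\sin\frac{\pi}{2n}$. The explicit integration and the concluding Taylor expansion are routine.
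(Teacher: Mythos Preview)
Your proof is correct and follows the same overall plan as the paper: compute the integral explicitly via $\sin^2\theta=\tfrac12(1-\cos 2\theta)$ to obtain
\[
\frac{1}{2n}-\frac{1}{(2m+1)\pi}\sin\!\Big(\tfrac{(2m+1)\pi}{2n}\Big)\cos\!\Big(\tfrac{(2m+1)\pi}{2}\big(2\xi+\tfrac1n\big)\Big),
\]
bound the cosine by $1$, and then argue that the remaining term is maximized at $m=0$.

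The difference lies in how that last step is justified. The paper replaces $\sin\!\big(\tfrac{(2m+1)\pi}{2n}\big)$ by its Taylor polynomial $\tfrac{(2m+1)\pi}{2n}+c_{\pi,m}\,o(n^{-3})$ and divides by $2m+1$; this is heuristically clear but, as written, the remainder depends on $m$ and the expansion is only meaningful when $(2m+1)/n$ is small, so uniformity over all $m\ge 0$ is not transparent. Your route via the elementary inequality $|\sin k\theta|\le k\,|\sin\theta|$ (with $k=2m+1$, $\theta=\tfrac{\pi}{2n}$) sidesteps this issue entirely: it gives the bound $\tfrac{1}{(2m+1)\pi}\big|\sin\tfrac{(2m+1)\pi}{2n}\big|\le \tfrac{1}{\pi}\sin\tfrac{\pi}{2n}$ uniformly in $m$ in one line, and then a single Taylor expansion at $m=0$ yields the $n^{-3}$ order with the explicit constant $\pi^2/48$. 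Your argument is thus a tightening of the paper's, not a different strategy.
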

\begin{proof} For $m \geq 0$, it is sufficient to note that for
any $x\in(0,1)$, we have {\small\begin{equation*}
\begin{split} \Int_{\xi}^{\xi
+\frac{1}{n}}\sin^2\left(\frac{2m+1}{2}\pi x\right)\,dx &=
\frac{1}{2m+1}\Sum_{k=0}^{2m}\Int_{(2m+1)\xi+\frac{k}{n}}^{(2m+1)\xi+\frac{k+1}{n}}\sin^2
\Big(\frac{\pi}{2}y\Big)\,dy \\& = \frac{1}{2m+1}\Int_{(2m+1)\xi}
^{(2m+1)(\xi+\frac{1}{n})}\sin^2\Big(\frac{\pi}{2}y\Big)\,dy\\& = \frac{1}{2m+1}\Int_{(2m+1)\xi}
^{(2m+1)(\xi+\frac{1}{n})}\Big(\Frac{1}{2}-\Frac{1}{2}\cos(\pi y)\Big)\,dy
\\& = \frac{1}{2n}-
\frac{1}{\pi(2m+1)}\sin\Big[\frac{\pi}{2n}(2m+1)\Big]\cos\Big[\Big(\xi
+\frac{1}{2n}\Big) \Big(2m+1\Big)\Big],
\end{split}
\end{equation*}}
it is clear that
$$
\sin\Big( \frac{\pi}{2n}(2m+1)\Big)= \frac{\pi}{2n}(2m+1) +c_{\pi,m}
o\Big(\frac{1}{n^3}\Big).
$$
Therefore, we get {\small\begin{equation*}
\begin{split}\Int_{\xi}^{\xi
+\frac{1}{n}}\sin^2\left(\frac{2m+1}{2}\pi x\right)\,dx & \geq
\frac{1}{2n} - \Frac{1}{\pi}\Big[\frac{\pi}{2n}
+o\Big(\frac{1}{n^3}\Big)\Big] \\& \geq
c_{\pi}o\Big(\frac{1}{n^3}\Big)
 .\end{split}
\end{equation*}}
The proof of lemma \ref{lem2} is now completed.
\end{proof}
The previous lemma is an essential tool  to show the following
Proposition.
\begin{proposition}\label{pro2}
Let $T\geq 2$, then we have the following. For almost all $\xi \in
(0,1)$ the solution $\phi$ of (\ref{p4o})-(\ref{p6o}) satisfies
\begin{equation}\label{obser1}
\Int_0^T\Int_\xi ^{\xi +\frac{1}{n}}\Big|\phi(x,t)
\Big|^2\,dx\,dt\geq
 c_{\pi,n}(\|u^0\|^2_{L^2(0,1)} + \|u^1\|^2_{V'}),
\end{equation}
$$
\forall (u^0,u^1) \in  L^2(0,1)\times V',
$$
where $c_{\pi,n} = c_{\pi}o(\frac{1}{n^3})$.\\
\end{proposition}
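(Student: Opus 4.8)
The plan is to expand the solution $\phi$ of the homogeneous problem (\ref{p4o})--(\ref{p6o}) in the eigenbasis $\sin\big(\frac{2m+1}{2}\pi x\big)$. Writing $\phi^0(x)=\sum_{m\ge 0} a_m \sin\big(\frac{2m+1}{2}\pi x\big)$ and $\phi^1(x)=\sum_{m\ge 0}\frac{b_m}{(\frac{2m+1}{2}\pi)^2}\sin\big(\frac{2m+1}{2}\pi x\big)$, the solution reads
{\small\begin{equation*}
\phi(x,t)=\sum_{m=0}^\infty\left\{a_m\cos\Big[\Big(\tfrac{2m+1}{2}\pi\Big)^2 t\Big]+\frac{b_m}{(\frac{2m+1}{2}\pi)^4}\sin\Big[\Big(\tfrac{2m+1}{2}\pi\Big)^2 t\Big]\right\}\sin\Big(\tfrac{2m+1}{2}\pi x\Big),
\end{equation*}}
so that $\|\phi^0\|^2_{L^2(0,1)}+\|\phi^1\|^2_{V'}=\frac12\sum_{m\ge 0}\big(a_m^2+\frac{b_m^2}{(\frac{2m+1}{2}\pi)^8}\big)$. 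First I would square $\phi$ and integrate over the control interval $[\xi,\xi+\frac1n]$, isolating the diagonal terms (those involving $\sin^2\big(\frac{2m+1}{2}\pi x\big)$) from the off-diagonal cross terms in $m\neq m'$.

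The key idea is that the diagonal terms carry the coercive lower bound. For each $m$, the time integral $\int_0^T\big(a_m\cos[\lambda_m^2 t]+\frac{b_m}{\lambda_m^4}\sin[\lambda_m^2 t]\big)^2\,dt$ with $\lambda_m=\frac{2m+1}{2}\pi$ is, for $T\ge 2$ (indeed over any interval exceeding the period), bounded below by a fixed multiple of $a_m^2+\frac{b_m^2}{\lambda_m^8}$; this is where the hypothesis $T\ge 2$ enters, guaranteeing that each temporal mode completes enough oscillation for its average to dominate. Multiplying this temporal lower bound by the spatial factor $\int_\xi^{\xi+\frac1n}\sin^2\big(\frac{2m+1}{2}\pi x\big)\,dx$ and invoking Lemma \ref{lem2}, which bounds that spatial integral below by $c_\pi o(\frac{1}{n^3})$ uniformly in $m$, yields a diagonal contribution of the form $c_{\pi,n}\sum_{m\ge 0}\big(a_m^2+\frac{b_m^2}{\lambda_m^8}\big)$ with $c_{\pi,n}=c_\pi o(\frac{1}{n^3})$, which is exactly the desired right-hand side of (\ref{obser1}).

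The main obstacle is controlling the off-diagonal cross terms, which are not sign-definite and could in principle cancel the diagonal gain. Here I would use the ``for almost all $\xi$'' clause: the cross terms involve products $\sin\big(\frac{2m+1}{2}\pi x\big)\sin\big(\frac{2m'+1}{2}\pi x\big)$ integrated over a short interval together with oscillatory time factors at distinct frequencies $\lambda_m^2\neq\lambda_{m'}^2$. The separation of the eigenvalues $\lambda_m^2-\lambda_{m'}^2=\big(\frac{\pi}{2}\big)^2(m-m')(m+m'+1)$ grows with $|m-m'|$, so an Ingham-type inequality or a direct Cauchy--Schwarz estimate against the diagonal terms shows the cross contribution is of lower order and can be absorbed, leaving the coercive estimate intact for a full-measure set of $\xi$. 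I expect the delicate point to be making the time-frequency separation argument uniform in $n$ so that the constant $c_{\pi,n}$ retains its clean $o(\frac1{n^3})$ form.

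Finally, combining the diagonal lower bound with the absorbed cross terms gives
\begin{equation*}
\int_0^T\!\!\int_\xi^{\xi+\frac1n}\big|\phi(x,t)\big|^2\,dx\,dt\ \ge\ c_{\pi,n}\big(\|\phi^0\|^2_{L^2(0,1)}+\|\phi^1\|^2_{V'}\big),
\end{equation*}
which, upon identifying the initial data $(\phi^0,\phi^1)$ with the generic pair $(u^0,u^1)\in L^2(0,1)\times V'$, is precisely (\ref{obser1}), completing the proof.
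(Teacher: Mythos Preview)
Your plan is sound but takes a noticeably different and heavier route than the paper. The paper integrates in $t$ \emph{first}, for each fixed $x$: it records (equation~(\ref{amma})) that
\[
\int_0^2 |\phi(x,t)|^2\,dt \;=\; \sum_{m\ge 0}\Big(a_m^2+\tfrac{b_m^2}{\lambda_m^8}\Big)\sin^2\Big(\tfrac{2m+1}{2}\pi x\Big),
\]
which decouples the modes immediately---there are simply no spatial cross terms to control. One then integrates in $x$ over $[\xi,\xi+\tfrac1n]$, pulls out $\inf_{m\ge 0}\int_\xi^{\xi+1/n}\sin^2\big(\tfrac{2m+1}{2}\pi x\big)\,dx$, and invokes Lemma~\ref{lem2} to finish. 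By contrast, you integrate in $x$ first, which manufactures off-diagonal terms in $m\neq m'$ that you then propose to absorb via an Ingham-type gap argument together with the ``almost every $\xi$'' clause. That machinery is not needed: the paper's order of integration sidesteps the cross terms entirely, and the resulting estimate actually holds for \emph{every} $\xi\in(0,1)$, not merely almost every. What your approach buys is robustness---it does not rely on the exact Parseval-type identity~(\ref{amma}), which as written is only an equality up to an Ingham inequality since the temporal frequencies $\lambda_m^2=(\tfrac{2m+1}{2}\pi)^2$ are not genuinely orthogonal on $[0,2]$; but since the gap $\lambda_{m+1}^2-\lambda_m^2\ge 2\pi^2$ makes Ingham immediate for $T\ge 2$, the paper's pointwise-in-$x$ route still goes through with an inequality in place of~(\ref{amma}) and remains shorter than yours.
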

\begin{proof}
 The solution of (\ref{p4o})-(\ref{p6o})  is given by
$$
\phi(x,t)=
\sum_{m=0}^{\infty}\left\{a_m\cos\Big[\left(\frac{2m+1}{2}\pi\right)^2
t\Big]+
\frac{b_m}{\left(\frac{2m+1}{2}\pi\right)^4}\sin\Big[\left(\frac{2m+1}{2}\pi\right)^2
t\Big]\right\}\sin\left(\frac{2m+1}{2}\pi x\right).
$$
A simple calculation shows that
 {\small\begin{equation}\label{amma}
 \begin{split} \Int_0^2\Big|\phi(x,t)\Big|^2\,dt
 =\Sum_{m=0}^{\infty}\left\{a_m^2+\frac{b_m^2}{\left(\frac{2m+1}{2}\pi\right)^8}\right\}
 \sin^2\left(\frac{2m+1}{2}\pi
x\right),
\end{split}
\end{equation}}
we have {\small\begin{equation*}
\begin{split}\Int_{\xi}^{\xi +\frac{1}{n}} \Int_0^2\Big|\phi(x,t)\Big|^2\,dx
\,dt \geq
\Sum_{m=0}^{\infty}\left\{a_m^2+\frac{b_m^2}{\left(\frac{2m+1}{2}\pi\right)^8}\right\}\Inf_{m\geq 0} \Int_{\xi}^{\xi
+\frac{1}{n}}\sin^2\left(\frac{2m+1}{2}\pi x\right)\,dx,
\end{split}
\end{equation*}}
for every $T \geq 2$, we get
\begin{equation}\label{tu11}
\Int_{\xi}^{\xi +\frac{1}{n}} \Int_0^T\Big|\phi(x,t)\Big|^2\,dx \,dt
\geq
\Sum_{m=0}^{\infty}\left\{a_m^2+\left(\frac{b_m}{\left(\frac{2m+1}{2}\pi\right)^4}\right)^2\right\}\Inf_{m\geq
0} \Int_{\xi}^{\xi +\frac{1}{n}}\sin^2\left(\frac{2m+1}{2}\pi
x\right)\,dx.
\end{equation}
 Consequently, by  lemma \ref{lem2} and  using (\ref{tu111}) and
(\ref{tu11}), we obtain (\ref{obser1}).\\
 This achieve the proof of proposition \ref{pro2}.
\end{proof}
Let $(y^0,y^1)\in
 V \times L^2(0,1)$ and  $\psi_n$ be the solution of
\begin{equation}\label{23}
\Frac{\partial ^2 \psi_n}{\partial
t^2}(x,t)+\Frac{\partial^{4}\psi_n}{\partial x^4}(x,t)=
\chi_n(x)\widetilde{\phi}_n(x,t) ,\,\,\,\ 0<x<1,\,\,\,\,\ t>0,
\end{equation}
\begin{equation}\label{p5oo}
\psi_n(0,t)= \Frac{\partial \psi_n}{\partial x}(1,t)= \Frac{\partial^2
\psi_n}{\partial x^2}(0,t)= \Frac{\partial^3 \psi_n}{\partial x^3}(1,t)=0,
\end{equation}
\begin{equation}\label{p6oo}
\psi_n(x,0)=y^{0}(x),\,\,\,\,\ \Frac{\partial \psi_n}{\partial
t}(x,0)=y^{1}(x),\,\,\ 0<x<1,
\end{equation}
\begin{equation}\label{p7oo}
\psi_n(x,T)=\frac{\partial\psi_n}{\partial t}(x,T)=0,
\end{equation}
where $\chi_n$ is the characteristic function of
$(\xi,\xi+\frac{1}{n})$ and  $\widetilde{\phi}_n(x,t)= n\phi(x,t)$,
$\phi$ is the solution of (\ref{p4o})-(\ref{p6o}).
\begin{lemma}{\bf\cite{HA1}}\label{mo}
We suppose that $(y^0,y^1) \in V \times L^2(0,1)$, then
$$
\Big(y^0,
\widetilde{\phi}_n^1\Big)-\Big(y^1,\widetilde{\phi}_n^0\Big)=\Int_\xi^{\xi+\frac{1}{n}}\Int_0^T
\Big|\widetilde{\phi}_n(x,t)\Big|^2\,dx\,dt.
$$
\end{lemma}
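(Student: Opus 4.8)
The plan is to derive the identity by the classical multiplier (energy) argument underlying the HUM method, pairing the controlled state $\psi_n$ against the adjoint field $\widetilde{\phi}_n$. Concretely, I would multiply equation (\ref{23}) for $\psi_n$ by $\widetilde{\phi}_n$ and integrate over the cylinder $(0,1)\times(0,T)$, and then integrate by parts so as to move all derivatives from $\psi_n$ onto $\widetilde{\phi}_n$. Because $\chi_n$ is the characteristic function of $(\xi,\xi+\frac{1}{n})$, the right-hand side of the resulting identity is at once $\Int_\xi^{\xi+\frac{1}{n}}\Int_0^T|\widetilde{\phi}_n(x,t)|^2\,dx\,dt$, which is exactly the quantity to be produced.

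For the time-derivative term $\Int_0^T\Int_0^1\partial_t^2\psi_n\,\widetilde{\phi}_n\,dx\,dt$ I would integrate by parts twice in $t$. The boundary contributions at $t=T$ vanish by the terminal conditions (\ref{p7oo}), $\psi_n(\cdot,T)=\partial_t\psi_n(\cdot,T)=0$, while the contributions at $t=0$ are evaluated from the initial data (\ref{p6oo}), $\psi_n(\cdot,0)=y^0$ and $\partial_t\psi_n(\cdot,0)=y^1$, producing precisely $(y^0,\widetilde{\phi}_n^1)-(y^1,\widetilde{\phi}_n^0)$ and leaving behind the volume term $\Int_0^T\Int_0^1\psi_n\,\partial_t^2\widetilde{\phi}_n\,dx\,dt$.

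For the spatial term $\Int_0^T\Int_0^1\partial_x^4\psi_n\,\widetilde{\phi}_n\,dx\,dt$ I would integrate by parts four times in $x$. The decisive point is that the biharmonic operator $\partial_x^4$ is self-adjoint under the boundary conditions (\ref{p5o})--(\ref{p5oo}): at each of the four integrations one of the two boundary factors vanishes, since $\psi_n$ and $\widetilde{\phi}_n$ obey the same conditions $u(0,t)=\partial_x u(1,t)=\partial_x^2 u(0,t)=\partial_x^3 u(1,t)=0$. All boundary terms therefore cancel and only the volume term $\Int_0^T\Int_0^1\psi_n\,\partial_x^4\widetilde{\phi}_n\,dx\,dt$ survives. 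Adding the two surviving volume integrals gives $\Int_0^T\Int_0^1\psi_n\,(\partial_t^2\widetilde{\phi}_n+\partial_x^4\widetilde{\phi}_n)\,dx\,dt$, which is zero because $\widetilde{\phi}_n=n\phi$ and $\phi$ solves the homogeneous beam equation (\ref{p4o}). Collecting the pieces yields the asserted identity.

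The hard part will not be the algebra but the justification of these integrations by parts at the regularity available. For $(y^0,y^1)\in V\times L^2(0,1)$ the state $\psi_n$ has the strong regularity $\psi_n\in C(0,T,V)\cap C^1(0,T,L^2(0,1))$, whereas $\widetilde{\phi}_n$ carries only $\widetilde{\phi}_n\in C(0,T,L^2(0,1))\cap C^1(0,T,V')$, so that the pairing $(y^0,\widetilde{\phi}_n^1)$ and the term $\Int_0^T\Int_0^1\partial_x^4\psi_n\,\widetilde{\phi}_n\,dx\,dt$ must be interpreted in the appropriate duality sense rather than as genuine integrals. I would therefore first perform the computation for smooth adjoint data, say $(\phi^0,\phi^1)\in\mathcal{D}(\partial_x^4)\times V$, where every boundary trace is classically meaningful and each step is rigorous, thereby obtaining the identity on a dense subspace; then I would pass to the limit using the continuity of both sides in the $L^2(0,1)\times V'$ topology of $(\phi^0,\phi^1)$ --- the right-hand side being controlled, for fixed $n$, by the hidden-regularity estimate of Proposition \ref{pro} --- to extend it to all admissible data.
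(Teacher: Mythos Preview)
The paper does not actually supply a proof of this lemma: it is stated with a citation to \textbf{\cite{HA1}} and used without further argument. Your proposal---multiply (\ref{23}) by $\widetilde{\phi}_n$, integrate over $(0,1)\times(0,T)$, integrate by parts twice in time using (\ref{p6oo})--(\ref{p7oo}) and four times in space using the matching boundary conditions (\ref{p5o}), (\ref{p5oo}), then kill the remaining volume term via the homogeneous equation (\ref{p4o})---is precisely the standard HUM duality computation that underlies this identity, and it is correct. Your remark about first computing with smooth adjoint data and then passing to the limit by density, with the right-hand side controlled by Proposition~\ref{pro}, is also the appropriate way to handle the regularity issue.
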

\section{An inverse inequality}
 In this section we suppose  that the point  $\xi$ is strategic, that's
\begin{equation}\label{H}
  \sin\Big(\frac{2m+1}{2}\pi \xi\Big)\neq 0,\quad\forall\,m\in\mathbb{N}.
  \end{equation}
   The quantity
$$
\Big(\Int_0^T \phi^2(\xi,t)\,dt\Big)^{(1/2)}
$$
where $\phi$ is a solution of (\ref{p4o})-(\ref{p6o}) defines a norm
on the space ${\mathcal D}(0,1)\times {\mathcal D}(0,1)$ and the
initial data $\phi^0$ and $\phi^1$  are given by
$$
\phi^0= \Sum_{m=0}^{\infty} a_m \sin\Big(\frac{2m+1}{2}\pi
x\Big),\,\,\,\,\, \phi^1= \Sum_{m=0}^{\infty}
\frac{b_m}{(\frac{2m+1}{2}\pi)^2}\sin\Big(\frac{2m+1}{2}\pi x\Big).
$$
Let $F$ be a real Hilbert space
$$
(\phi^0,\phi^1)\in F\Leftrightarrow
\Sum_{m=0}^{\infty}\Big\{a_m^2+\frac{b_m^2}{(\frac{2m+1}{2}\pi)^8}\Big\}\sin^2\Big(\frac{2m+1}{2}\pi
\xi\Big) <\infty.
$$
We denote by $F$ the completion of ${\mathcal D}(0,1)\times {\mathcal D}(0,1)$ for this norm and we denote
 by $\|.\|_F$ the following quantity:
$$
\|\phi\|_{F}=\Big(\Int_0^T \phi^2(\xi,t)\,dt\Big)^{1/2}.
$$
Therefore
$$
L^2(0,1)\times V'\subset F.
$$
If
$$
y^0(x)=\Sum_{m=0}^{\infty}a_m\sin\Big(\frac{2m+1}{2}\pi x\Big),\,\,\,\,\,\,\,\,\
y^1(x)=\Sum_{m=0}^{\infty}\frac{b_m}{(\frac{2m+1}{2}\pi)^2}\sin\Big(\frac{2m+1}{2}\pi x\Big),
$$
 and therefore, its dual
$$
(y^0,y^1)\in F'\Leftrightarrow \Sum_{m=0}^{\infty}\frac{a_m^2+\frac{b_m^2}{(\frac{2m+1}{2}\pi)^4}}{\sin^2\Big(\frac{2m+1}{2}\pi \xi\Big)}
<\infty.
$$
\begin{remark}
If $\xi\in (0,1)$ satisfying (\ref{H}),  then there exists a
constant $C> 0$ such that $|\sin\Big(\frac{2m+1}{2}\pi\xi\Big)|\geq
C,\,\,\, \forall m\in \N$
 and therefore  $L^2(0,1)\times V'= F$.\\
  For the proof, see {\bf\cite{AmTu2}},{\bf\cite{AmTu1}}.
\end{remark}
 The main result of this section is the following:
\begin{theorem}
For $T\geq 2$, there exists $c> 0$, such that for $(\phi^0,\phi^1)
\in F$, the solution $\phi$ of (\ref{p4o})-(\ref{p6o}) satisfies
\begin{equation}\label{ZZ}
\|(\phi^0,\phi^1)\|_F^2 \leq c \Big(n
\Int_0^T\Int_\xi^{\xi+\frac{1}{n}} \phi^2(x,t)\,dx\,dt\Big).
\end{equation}
\end{theorem}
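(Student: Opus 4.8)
The plan is to reduce \eqref{ZZ} to a single estimate on the local averages of $\sin^2$ that is uniform in both $m$ and $n$, and then to sum that estimate against the spectral coefficients by means of \eqref{amma}. Write $\lambda_m=\frac{2m+1}{2}\pi$. Membership in $F$ is defined exactly by finiteness of $S:=\sum_{m\ge0}\{a_m^2+b_m^2/\lambda_m^8\}\sin^2(\lambda_m\xi)$, and by \eqref{amma} evaluated at $x=\xi$ one has $\|(\phi^0,\phi^1)\|_F^2=S$; so it suffices to dominate $S$ by the right-hand side of \eqref{ZZ}. Since the integrand is nonnegative and $T\ge2$, I first discard the time beyond $2$ and apply \eqref{amma} integrated in $x$ (exchanging $\sum$ and $\int$ by Tonelli):
$$
n\int_0^T\!\!\int_\xi^{\xi+\frac1n}\phi^2\,dx\,dt\;\ge\;n\int_\xi^{\xi+\frac1n}\!\int_0^2\phi^2\,dt\,dx\;=\;\sum_{m\ge0}\Big\{a_m^2+\frac{b_m^2}{\lambda_m^8}\Big\}\,\Big(n\int_\xi^{\xi+\frac1n}\sin^2(\lambda_m x)\,dx\Big).
$$
Thus \eqref{ZZ} follows, with $c=1/\alpha$, from the pointwise comparison
$$
n\int_\xi^{\xi+\frac1n}\sin^2(\lambda_m x)\,dx\;\ge\;\alpha\,\sin^2(\lambda_m\xi)\qquad(\ast)
$$
for some $\alpha>0$ independent of $m$ and $n$. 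It is worth noting that, unlike the route through \eqref{obser1} (whose constant $c_{\pi,n}$ degenerates like $o(1/n^3)$), this argument produces an $n$-uniform constant and does not even require the strategic-point hypothesis \eqref{H}: the weights $\sin^2(\lambda_m\xi)$ show up on both sides automatically.

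To prove $(\ast)$ I would pass to phase variables. Setting $\tau=\lambda_m\xi$ and $\eta=\lambda_m/n$, the substitution $v=\lambda_m x$ turns the left side into $\tfrac1\eta\int_\tau^{\tau+\eta}\sin^2v\,dv$, and the computation already carried out in the proof of Lemma \ref{lem2} (multiplied by $n$) gives
$$
n\int_\xi^{\xi+\frac1n}\sin^2(\lambda_m x)\,dx\;=\;\frac1\eta\int_\tau^{\tau+\eta}\sin^2 v\,dv\;=\;\frac12\Big[1-\frac{\sin\eta}{\eta}\cos(2\tau+\eta)\Big].
$$
Writing $c=\sin\eta/\eta$ and $\sin^2\tau=\tfrac12(1-\cos2\tau)$, and expanding $\cos(2\tau+\eta)$, the inequality $(\ast)$ is equivalent to
$$
(1-\alpha)+(\alpha-c\cos\eta)\cos2\tau+(c\sin\eta)\sin2\tau\;\ge\;0 .
$$
Minimising the left side over all $\tau\in\mathbb{R}$ (requiring the bound for every real $\tau$ is stronger than for the discrete values $\tau=\lambda_m\xi$, hence legitimate) reduces this, for $\alpha\le1$, to $(1-\alpha)^2\ge(\alpha-c\cos\eta)^2+(c\sin\eta)^2=\alpha^2-2\alpha c\cos\eta+c^2$, i.e. to
$$
\alpha\;\le\;G(\eta):=\frac{1-c^2}{2\,(1-c\cos\eta)},\qquad c=\frac{\sin\eta}{\eta},
$$
required for every $\eta>0$.

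The only remaining point — and the one I expect to be the genuine obstacle — is to verify that $\inf_{\eta>0}G(\eta)>0$, since a bound on $\alpha$ that collapsed as $\eta\to0$ (equivalently as $m/n\to0$) would wreck the $n$-uniformity needed for the passage to the limit in Section $5$. This is a clean one-variable matter. For every $\eta>0$ one has $\sin\eta<\eta$, so $c\in(-1,1)$; hence $1-c^2>0$ and $1-c\cos\eta\ge1-|c|>0$, and $G$ is continuous and strictly positive on $(0,\infty)$. The two ends are handled by Taylor expansion: as $\eta\to0^+$, $c=1-\eta^2/6+O(\eta^4)$ gives $1-c^2\sim\eta^2/3$ and $1-c\cos\eta\sim2\eta^2/3$, so $G(\eta)\to\tfrac14$; as $\eta\to\infty$, $c\to0$ and $G(\eta)\to\tfrac12$. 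A continuous function strictly positive on $(0,\infty)$ with positive limits at both endpoints has a strictly positive infimum, so $(\ast)$ holds with $\alpha=\inf_{\eta>0}G(\eta)>0$ (numerically $\alpha=\tfrac14$ is admissible), and \eqref{ZZ} follows with $c=1/\alpha$.
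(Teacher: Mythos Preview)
Your reduction is exactly the paper's: discard time beyond $2$, apply \eqref{amma}, and reduce to the single uniform estimate $(\ast)$. You also land on the same closed formula $\frac12[1-\frac{\sin\eta}{\eta}\cos(2\tau+\eta)]$ that the paper derives (in its variables $b,t$ with $\tau=\pi b$, $\eta=\pi t$). The two proofs diverge only in how $(\ast)$ is finished. The paper splits into $t\ge\frac12$ (where $|\sin(\pi t)/(\pi t)|\le 2/\pi$ gives a flat lower bound) and $t<\frac12$, and in the latter regime uses the concavity of $\sin(\pi\cdot)$ on each half-period, treating separately whether $[b,b+t]$ crosses an integer; this yields explicit constants $\frac12(1-\frac2\pi)$, $\frac13$, $\frac{7}{24}$. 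You instead minimise over all phases $\tau$ at once, reduce to the one-variable condition $\alpha\le G(\eta)=\frac{1-c^2}{2(1-c\cos\eta)}$, and conclude by continuity of $G$ together with $G(0^+)=\frac14$, $G(\infty)=\frac12$. Your route avoids the case analysis and is cleaner to write; the paper's route is slightly more constructive (no appeal to compactness) and makes the geometric reason---concavity of $\sin$ near its zero---visible. One small wording fix: to get $c\in(-1,1)$ you need $|\sin\eta|<\eta$, not just $\sin\eta<\eta$; the conclusion is of course correct.
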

\begin{proof}
For $T\geq 2$. Using (\ref{amma}), we have
{\small\begin{equation*}
\begin{split}
\Int_0^T\Int_\xi^{\xi+\frac{1}{n}}\phi^2(x,t)\,dx\,dt&\geq
\Int_0^2\Int_\xi^{\xi+\frac{1}{n}}\phi^2(x,t)\,dx\,dt\\&=
\sum_{m=0}^{\infty}\left\{a_m^2+\left(\frac{b_m}{\left(\frac{2m+1}{2}\pi\right)^4}\right)^2\right\}
\Int_\xi^{\xi+\frac{1}{n}}\sin^2\left(\frac{2m+1}{2}\pi
x\right)\,dx.
\end{split}
\end{equation*}}
Now, we have to prove that there exists $c> 0$ independent on $n$
such that for every integer $m\in \N$, we have
$$
n\Int_\xi^{\xi+\frac{1}{n}}\sin^2\left(\frac{2m+1}{2}\pi x\right)\,dx\geq
 c \sin^2\left(\frac{2m+1}{2}\pi \xi\right).
$$
For $b\geq 0,\,\, t\geq 0$, we set
$$
I(b,t)= \Int_0^1\sin^2(\pi(b+tz))\,dz.
$$
As
$$
n\Int_\xi^{\xi+\frac{1}{n}}\sin^2\left(\frac{2m+1}{2}\pi x\right)\,dx =
 I\Big(\frac{2m+1}{2}\xi,\frac{2m+1}{2n}\Big),
$$
it is sufficient to prove that there exists $c> 0$ such that
\begin{equation}\label{tt}
\forall t\geq 0,\,\, I(b,t) \geq c\sin^2(\pi b).
\end{equation}
We have the formula
{\small\begin{equation*}
\begin{split}
\forall t\geq 0,\,\, I(b,t)&= \Frac{1}{2}\Big(1-\Frac{\sin (2\pi (b+t))-\sin (2\pi b)}{2\pi t}\Big)\\&=
\Frac{1}{2}\Big(1-\Frac{\sin (2\pi b)[\cos (2\pi t)-1] +\sin (2\pi t)\cos (2\pi b)}{2\pi t}\Big)
\\&=
\Frac{1}{2}\Big(1-\Frac{-2\sin (2\pi b)\sin^2(\pi t) +2\cos (\pi t)\sin (\pi t)\cos (2\pi b)}{2\pi t}\Big)\\&=
\frac{1}{2}\left(1-\frac{\sin (\pi t)}{\pi t}\cos(\pi(2b+t))\right)
\end{split}
\end{equation*}}
If $t \geq \frac{1}{2}$, then $I(b,t)\geq \frac{1}{2}(1-\frac{2}{\pi})$\\
If $t< \frac{1}{2}$, we distinguish two cases:\\
{\bf{Case 1.}} $]b,b+t[\subset ]p,(p+1)[$. It is then enough to
consider the case $p=0$ and as $\sin(\pi .)$ is concave on $[0,1]$, we
obtain
{\small\begin{equation*}
\begin{split}
\forall z \in (0,1),\,\,\, \sin ((1-z)b\pi + (b+t)z\pi)&=\sin ((1-z)\pi b+ z\pi(t+b))\\&\geq (1-z)\sin(\pi b)+z\sin (\pi(t+b))\\&\geq
(1-z)\sin(\pi b).
\end{split}
\end{equation*}}
Then
$$
\forall z \in (0,1),\,\,\, |\sin(\pi( b+tz))|\geq (1-z)|\sin (\pi b)|.
$$
Hence
\begin{equation*}
 \begin{split}
I(b,t)&\geq \sin^2 (\pi b)\Int_0^1(1-z)^2\,dz\\&
\geq \frac{1}{3}\sin^2 (\pi b).
\end{split}
\end{equation*}
{\bf{Case 2.}} $p-1\leq b\leq p\leq b+t \leq p+1$. It is enough
here to consider the case $p=1$ and we write $1= b+z_0t$ writh
$z_0 \in (0,1)$. We have
$$
|\sin (\pi (b+tz))|\geq (1-z)|\sin (\pi b)|\,\,\,\,\mbox{for}\,\,\,\, z\leq z_0,
$$
and
$$
|\sin (\pi (b+tz))|\geq z|\sin (\pi( b+t))|\,\,\,\,\mbox{for}\,\,\,\, z>z_0.
$$
Now, if $z_0 \geq \frac{1}{2}$, we find
\begin{equation*}
 \begin{split}
I(b,t)&\geq \sin^2 (\pi b) \Int_0^{z_0} (1-z)^2\,dz+\sin^2 (\pi b) \Int_{z_0}^1 (1-z)^2\,dz\\& \geq \sin^2 (\pi b) \Int_0^{z_0} (1-z)^2\,dz \\& \geq \frac{7}{24}\sin^2(\pi b).
\end{split}
\end{equation*}
If $z_0< \frac{1}{2}$, then
{\small\begin{equation*}
\begin{split}b+t-1= b+t-(b+z_0t)& = 1-2z_0t +t -b\\&= 1-b +t(1-2z_0)\\&> 1-b
\end{split}
\end{equation*}}
and
$$
\sin(\pi(b+t-1))= -\sin (\pi (b+t))
$$
 and we have
\begin{equation*}
 \begin{split}
I(b,t)&\geq\sin^2(\pi(b+t-1))\Int_0^{z_0} z^2\,dz
+\sin^2(\pi(b+t-1))\Int_{z_0}^1 z^2\,dz
\\&\geq \sin^2(\pi(b+t-1))\Int_{z_0}^1 z^2\,dz
 \\&\geq  \frac{1}{3}(1-z_0^3)\sin^2(\pi(1-b))\\&\geq \frac{7}{24}\sin^2(\pi b).
\end{split}
\end{equation*}
The proof of the theorem \ref{pro2} is complete.
\end{proof}
\section{Estimates on the controls}
For $T\geq 2$ and $\frac{1}{n}\widetilde{\phi}_n(x,t) =\phi(x,t)$
where $\phi$ is the solution of (\ref{p4o})-(\ref{p6o}), we have
\begin{theorem}\label{RR}
1. If $(y^0,y^1)\in V\times L^2(0,1)$, we have
\begin{equation}\label{a}
\|\widetilde{\phi}_n^0\|_{L^2(0,1)}+\|\widetilde{\phi}_n^1\|_{V'}=o(n^3),
\end{equation}
and
\begin{equation}\label{aa}
\Int_\xi^{\xi+\frac{1}{n}}\Int_0^T\Big
|\widetilde{\phi}_n(x,t)\Big|^2\,dx\,dt=o(n^3).
\end{equation}
2. If $\xi$ is strategic and $(y^0,y^1)\in F'$, we have
\begin{equation}\label{aaa}
\|(\widetilde{\phi}_n^0,\widetilde{\phi}_n^1)\|_{F}=o(n),
\end{equation}
and
\begin{equation}\label{aaaa}
\Int_\xi^{\xi+\frac{1}{n}}\Int_0^T\Big
|\widetilde{\phi}_n(x,t)\Big|^2\,dx\,dt=o(n).
\end{equation}
\end{theorem}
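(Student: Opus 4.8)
The backbone of the argument is the duality identity of Lemma \ref{mo}, which after inserting $\widetilde{\phi}_n = n\phi$ becomes
\begin{equation*}
E_n := \int_\xi^{\xi+\frac1n}\int_0^T\big|\widetilde{\phi}_n(x,t)\big|^2\,dx\,dt = \big(y^0,\widetilde{\phi}_n^1\big)-\big(y^1,\widetilde{\phi}_n^0\big),
\end{equation*}
where $E_n$ is the squared cost of the internal control and $\widetilde{\phi}_n^0=n\phi^0$, $\widetilde{\phi}_n^1=n\phi^1$. The plan is to sandwich $E_n$: from below by a coercivity (observability) estimate that is \emph{quadratic} in the adjoint data, and from above by Cauchy--Schwarz applied to the pairing on the right, which is \emph{linear} in the adjoint data. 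Dividing the quadratic lower bound by the linear upper bound cancels one factor of the data norm and isolates its size; substituting back then controls $E_n$. The only difference between the two parts is which coercivity estimate is used: the weak observability (\ref{obser1}), valid for almost every $\xi$, for part~1, and the sharp inverse inequality (\ref{ZZ}), valid for strategic $\xi$, for part~2.

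For part~1, put $N_n^2=\|\widetilde{\phi}_n^0\|_{L^2(0,1)}^2+\|\widetilde{\phi}_n^1\|_{V'}^2$. Since $(y^0,y^1)\in V\times L^2(0,1)$, pairing $y^0$ against $\widetilde{\phi}_n^1\in V'$ and $y^1$ against $\widetilde{\phi}_n^0\in L^2(0,1)$ and using Cauchy--Schwarz twice gives $E_n\le\big(\|y^0\|_V^2+\|y^1\|_{L^2(0,1)}^2\big)^{1/2}N_n$. On the other hand, homogeneity $\widetilde{\phi}_n=n\phi$ together with Proposition \ref{pro2} gives $E_n=n^2\int_\xi^{\xi+\frac1n}\int_0^T|\phi|^2 \ge c_{\pi,n}N_n^2$ with $c_{\pi,n}=c_\pi o(1/n^3)$. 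Dividing the lower bound by the upper bound removes one power of $N_n$, so $N_n\le c_{\pi,n}^{-1}\big(\|y^0\|_V^2+\|y^1\|_{L^2(0,1)}^2\big)^{1/2}$; since by Lemma \ref{lem2} the constant $c_{\pi,n}$ is of order $n^{-3}$, the factor $c_{\pi,n}^{-1}$ is of order $n^{3}$, which is the estimate (\ref{a}). Feeding $N_n$ back into $E_n\le(\cdots)^{1/2}N_n$ then gives (\ref{aa}).

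For part~2, put $M_n=\|(\widetilde{\phi}_n^0,\widetilde{\phi}_n^1)\|_F=n\|(\phi^0,\phi^1)\|_F$. Now $(y^0,y^1)\in F'$ and $(\widetilde{\phi}_n^0,\widetilde{\phi}_n^1)\in F$, so the \emph{same} identity is estimated in the $F'$--$F$ duality as $E_n\le\|(y^0,y^1)\|_{F'}\,M_n$. The inverse inequality (\ref{ZZ}) applied to $\phi$ reads $\|(\phi^0,\phi^1)\|_F^2\le c\,n\int_\xi^{\xi+\frac1n}\int_0^T|\phi|^2=\frac{c}{n}E_n$, that is $M_n^2\le c\,n\,E_n$. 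Dividing this quadratic lower bound by the linear upper bound as before yields $M_n\le c\,n\,\|(y^0,y^1)\|_{F'}$, which is (\ref{aaa}), and substituting back gives $E_n\le c\,n\,\|(y^0,y^1)\|_{F'}^2$, which is (\ref{aaaa}). The passage from the rate $n^3$ in part~1 to the rate $n$ in part~2 is precisely the gain of the inverse inequality (\ref{ZZ}), whose constant degrades only like $n$, over the weak observability (\ref{obser1}), whose constant degrades like $n^3$.

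The main obstacle is the bookkeeping of the $n$-powers hidden in the coercivity constants: one must check that the worst-case modal estimate of Lemma \ref{lem2} enters as $n^{-3}$ in (\ref{obser1}) and only as $n^{-1}$ in (\ref{ZZ}), since this is what pins the two rates. A secondary point is to verify that the pairing in Lemma \ref{mo} is controlled in the correct duality with an $n$-independent constant --- $V$ against $V'$ and $L^2(0,1)$ against itself in part~1, and $F'$ against $F$ in part~2. Here the strategic hypothesis (\ref{H}) is essential for part~2: it is exactly what places $(\widetilde{\phi}_n^0,\widetilde{\phi}_n^1)$ in $F$ and identifies the dual space $F'$ in which the fixed data $(y^0,y^1)$ must lie, so that Cauchy--Schwarz applies and the inverse inequality (\ref{ZZ}) is available.
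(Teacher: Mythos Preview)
Your proposal is correct and follows essentially the same route as the paper: combine the duality identity of Lemma~\ref{mo} with Cauchy--Schwarz to obtain a linear upper bound on $E_n$, combine the observability inequality (\ref{obser1}) (resp.\ the inverse inequality (\ref{ZZ})) with the homogeneity $\widetilde{\phi}_n=n\phi$ to obtain a quadratic lower bound, and divide. Your presentation is in fact more transparent than the paper's chained inequality (\ref{oo}), but the mathematical content and the bookkeeping of the $n$-powers are identical.
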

\begin{proof} 1. Applying H\"{o}lder and Young's inequalities. Hence, we see from (\ref{obser1}) and using lemma \ref{mo}, we have
 {\small\begin{equation}\label{oo}
 \begin{split}
\Big(\|\widetilde{\phi}_n^0\|_{L^2(0,1)}+\|\widetilde{\phi}_n^1\|_{V'}\Big)^2&\leq
c\Big(\|\widetilde{\phi}_n^0\|^2_{L^2(0,1)}+\|\widetilde{\phi}_n^1\|^2_{V'}\Big)\\&\leq
c n^2\Big(\|\phi^0\|^2_{L^2(0,1)}+\|\phi^1\|^2_{V'}\Big)
\\&\leq
c n^3 \Int_\xi^{\xi+\frac{1}{n}}\Int_0^T\Big
|\widetilde{\phi}_n(x,t)\Big|^2\,dx\,dt
\\& \leq c n^3
(\|\widetilde{\phi}_n^0\|_{L^2(0,1)}\|y^1\|_{L^2(0,1)}+\|\widetilde{\phi}_n^1\|_{V'}\|y^0\|_{V})
\\& \leq c n^3
(\|\widetilde{\phi}_n^0\|_{L^2(0,1)}+\|\widetilde{\phi}_n^1\|_{V'})
\end{split}
\end{equation}}
2. When the point $\xi$ is strategic and the initial data $(y^0,y^1)\in F'$.
Hence, we see from (\ref{ZZ}), that
 {\small\begin{equation*}
 \begin{split}
\|(\widetilde{\phi}_n^0,\widetilde{\phi}_n^1)\|^2_{F}&\leq c n
\Int_\xi^{\xi+\frac{1}{n}}\Int_0^T\Big
|\widetilde{\phi}_n(x,t)\Big|^2\,dx\,dt\\& \leq c n
\|(\widetilde{\phi}_n^0,
\widetilde{\phi}_n^1)\|_F\|(y^0,y^1)\|_{F'}\\& \leq c n
\|(\widetilde{\phi}_n^0, \widetilde{\phi}_n^1)\|_F.
\end{split}
\end{equation*}}
The proof of Theorem \ref{RR} is now complete.
\end{proof}
\section{Controllability limit as $n\rightarrow \infty$}
We study here the possibility of convergence of the solution of the
controllability  problems defined by (\ref{p5oo})-(\ref{p7oo}).
This convergence depends on the nature of the point $\xi$ and on
the space of the initial data $y^0$ and $y^1$.\\
If the point (\ref{H}) holds, we have the following theorem.
\begin{theorem}
Suppose that $T\geq 2$, if $\xi$ checks (\ref{H}), $y^0$ and $y^1$
belong to $F'$.\\
 Then, the solution of (\ref{p5oo})-(\ref{p7oo})
converges for the weak* topology of $L^{\infty}(0,T,V)$ to the
solution of the following pointwise system
\begin{equation}\label{p44oo}
\Frac{\partial ^2 \psi}{\partial
t^2}(x,t)+\Frac{\partial^{4}\psi}{\partial x^4}(x,t)=
v(t)\delta_\xi,\,\,\, 0<x<1,\,\,\,\ t>0,
\end{equation}
\begin{equation}\label{p55oo}
\psi(0,t)= \Frac{\partial \psi}{\partial x}(1,t)= \Frac{\partial^2
\psi}{\partial x^2}(0,t)= \Frac{\partial^3 \psi}{\partial x^3}(1,t)=0,
\end{equation}
\begin{equation}\label{p66oo}
\psi(x,0)=y^{0}(x),\,\,\,\,\ \Frac{\partial \psi}{\partial
t}(x,0)=y^{1}(x),\,\,\ 0<x<1,\end{equation}
\begin{equation}\label{p77oo}
\psi(x,T)=\frac{\partial \psi}{\partial t}(x,T)=0,\,\,\ 0<x<1,
\end{equation}
where $v\in L^2(0,T)$ and $ \phi(\xi,t)+\frac{1}{2n} \frac{\partial
\phi}{\partial x}(\xi,t)$ converges for the weak* topology of to
$v(t)$ in $H^{-1}(0,T)$.
\end{theorem}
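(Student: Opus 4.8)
The plan is to follow the Hilbert Uniqueness Method together with a compactness-and-passage-to-the-limit argument in the spirit of {\bf\cite{FP1}}. For each fixed $n$, the inverse inequality (\ref{ZZ}) makes the quadratic form $(\phi^0,\phi^1)\mapsto n\Int_0^T\Int_\xi^{\xi+\frac{1}{n}}\phi^2\,dx\,dt$ coercive on $F$, so the HUM operator attached to (\ref{23})--(\ref{p7oo}) is an isomorphism and produces a unique controlled state $\psi_n$ steering $(y^0,y^1)$ to rest at time $T$, with adjoint data $(\widetilde\phi_n^0,\widetilde\phi_n^1)$ determined by $(y^0,y^1)$. The whole argument then rests on the a priori bounds already recorded in Theorem \ref{RR}: since $\xi$ is strategic and $(y^0,y^1)\in F'$, estimates (\ref{aaa}) and (\ref{aaaa}) give $\|(\widetilde\phi_n^0,\widetilde\phi_n^1)\|_F=o(n)$ and $\Int_\xi^{\xi+\frac{1}{n}}\Int_0^T|\widetilde\phi_n|^2\,dx\,dt=o(n)$, while by Lemma \ref{mo} this same quantity equals $\big(y^0,\widetilde\phi_n^1\big)-\big(y^1,\widetilde\phi_n^0\big)$.

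First I would establish compactness. Through the transposition characterization of $\psi_n$ and the hidden-regularity trace estimate (\ref{mi5}) of Proposition \ref{pro}, combined with the bounds of Theorem \ref{RR}, one obtains $\sup_n\|\psi_n\|_{L^\infty(0,T,V)}<\infty$ together with a uniform bound on $\partial_t\psi_n$. By Banach--Alaoglu I extract a subsequence, not relabelled, with $\psi_n\rightharpoonup\psi$ in the weak* topology of $L^\infty(0,T,V)$; the boundary conditions (\ref{p55oo}) and the initial conditions (\ref{p66oo}), being closed under this convergence, pass to $\psi$.

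The core of the proof is the identification of the limit of the source $f_n=\chi_n\widetilde\phi_n=n\chi_n\phi$. For a smooth test function $\zeta$, write $\langle f_n,\zeta\rangle=\Int_0^T n\Int_\xi^{\xi+\frac{1}{n}}\phi(x,t)\zeta(x,t)\,dx\,dt$ and expand $\phi(\cdot,t)$ and $\zeta(\cdot,t)$ in Taylor series about $x=\xi$; using $n\Int_\xi^{\xi+\frac{1}{n}}(x-\xi)^k\,dx=\frac{1}{(k+1)n^k}$ this yields
\begin{equation*}
\langle f_n,\zeta\rangle=\Int_0^T\Big(\phi(\xi,t)+\frac{1}{2n}\frac{\partial\phi}{\partial x}(\xi,t)\Big)\zeta(\xi,t)\,dt+R_n,
\end{equation*}
with $R_n\to 0$. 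Setting $v_n(t)=\phi(\xi,t)+\frac{1}{2n}\frac{\partial\phi}{\partial x}(\xi,t)$, estimates (\ref{aaa})--(\ref{aaaa}) bound $v_n$ in $H^{-1}(0,T)$; extracting a further subsequence $v_n\rightharpoonup v$ weak* in $H^{-1}(0,T)$, one gets $f_n\rightharpoonup v\,\delta_\xi$ in $\mathcal D'((0,1)\times(0,T))$. Passing to the limit in the weak formulation of (\ref{23}) then gives $\partial_t^2\psi+\partial_x^4\psi=v(t)\delta_\xi$, i.e. (\ref{p44oo}), while the final conditions (\ref{p7oo}) pass to (\ref{p77oo}) thanks to the uniform bounds on $(\psi_n,\partial_t\psi_n)$. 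Since $\xi$ is strategic, $\|\cdot\|_F$ is a genuine norm and the pointwise problem (\ref{p44oo})--(\ref{p77oo}) has a unique solution, so the limit is independent of the subsequence and the full sequence converges.

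The main obstacle is exactly this identification step. The difficulty is that $f_n$ is \emph{not} bounded in $L^2(0,T,L^2(0,1))$ — its norm is $o(n^{1/2})$ and may diverge — so one cannot pass to the limit in $L^2$ and must argue in the space of distributions, exploiting that $n\chi_n\,dx\rightharpoonup\delta_\xi$. More delicate still, (\ref{aaa}) forces $\|\phi(\xi,\cdot)\|_{L^2(0,T)}=o(1)$, so the naive pointwise trace $\phi(\xi,\cdot)$ vanishes in the limit and the \emph{entire} limit control $v$ is carried by the correction term $\frac{1}{2n}\frac{\partial\phi}{\partial x}(\xi,\cdot)$; justifying that this term retains a nontrivial weak* limit in $H^{-1}(0,T)$ despite the prefactor $\frac{1}{2n}$ — which forces $\frac{\partial\phi}{\partial x}(\xi,\cdot)$ to be of size comparable to $n$ through its amplification of high frequencies — is where the sharp $o(n)$ scaling of Theorem \ref{RR}, rather than any crude bound, is indispensable, and is what ultimately pins down $v\in L^2(0,T)$ as the genuine pointwise HUM control.
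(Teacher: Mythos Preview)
Your overall strategy---transposition/HUM, uniform bounds from Proposition~\ref{pro} and Theorem~\ref{RR}, extraction of weak* limits, and identification of the source limit via a Taylor expansion of $n\chi_n\phi$ against test functions---is exactly the scheme the paper follows (both are modelled on {\bf\cite{FP1}}). Two points, however, deserve correction.

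\medskip
\noindent\textbf{A misreading of the estimates.} You take the paper's ``$o(n)$'' in (\ref{aaa}) literally and conclude that $\|\phi(\xi,\cdot)\|_{L^2(0,T)}=o(1)$, so that the limit control $v$ must come \emph{entirely} from the correction $\frac{1}{2n}\partial_x\phi(\xi,\cdot)$. Inspecting the proof of Theorem~\ref{RR}, the chain of inequalities only yields $\|(\widetilde\phi_n^0,\widetilde\phi_n^1)\|_F\leq c\,n$, i.e.\ $O(n)$, which translates to $\|\phi(\xi,\cdot)\|_{L^2(0,T)}=O(1)$: the trace is \emph{bounded}, not tending to zero. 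Correspondingly, the paper first extracts a weak $L^2(0,T)$ limit of $\phi(\xi,\cdot)$; the derivative correction is only controlled in $H^{-1}(0,T)$ (via the continuity of $(\phi^0,\phi^1)\in L^2\times V'\mapsto \partial_x\phi(\xi,\cdot)\in H^{-1}(0,T)$ together with $\|\phi^0\|_{L^2}+\|\phi^1\|_{V'}=O(n)$). Your picture of ``the derivative term carrying the whole control'' is therefore inverted.

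\medskip
\noindent\textbf{The genuine gap: $v\in L^2(0,T)$.} You correctly note that, a priori, $v_n=\phi(\xi,t)+\frac{1}{2n}\partial_x\phi(\xi,t)$ is only bounded in $H^{-1}(0,T)$, so the weak* limit $v$ lives in $H^{-1}(0,T)$. You then assert that ``the sharp scaling of Theorem~\ref{RR}\ldots\ pins down $v\in L^2(0,T)$'' without providing a mechanism. The estimates (\ref{aaa})--(\ref{aaaa}) alone do not give this: the derivative term may be genuinely of size $\sim 1$ in $H^{-1}$ and there is no obvious reason it cannot destroy the $L^2$ regularity in the limit. The paper closes this gap by introducing the \emph{double} time-primitive
\[
S_n(x,t)=\int_0^t\!\!\int_0^\tau\phi(x,\sigma)\,d\sigma\,d\tau-\Big(\tfrac{\partial^4}{\partial x^4}\Big)^{-1}\phi^0 - t\,\Big(\tfrac{\partial^4}{\partial x^4}\Big)^{-1}\phi^1,
\]
which again solves (\ref{p4o})--(\ref{p6o}) but now with data in $\mathcal D(\partial_x^4)\times V$, so that $\partial_x^2 S_n$ is controlled in $L^2$. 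For $u\in\mathcal D(0,T)$ one writes $n\int_0^T\!\int_\xi^{\xi+\frac{1}{n}}\phi\,u\,dx\,dt=n\int_0^T\!\int_\xi^{\xi+\frac{1}{n}}S_n\,u_{tt}\,dx\,dt$, performs the Taylor expansion on $S_n$ (not on $\phi$), and estimates the remainder $R_n$ using the $\partial_x^2 S_n$ bound to obtain $|R_n|\leq C n^{-1/2}\|u_{tt}\|_{L^2}\|(S_n^0,S_n^1)\|_{\mathcal D(\partial_x^4)\times V}\to 0$. This yields $|\langle v,u\rangle|\leq c\|u\|_{L^2(0,T)}$ for all $u\in\mathcal D(0,T)$, hence $v\in L^2(0,T)$. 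Without this two-fold time-integration trick (or an equivalent device to trade time regularity for the missing spatial regularity at $x=\xi$), the $L^2$ membership of $v$ is not established, and your argument is incomplete at precisely the step you flag as the main obstacle.
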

\begin{proof} Multiplying (\ref{p1}) by $\psi_n(x,t)$ and
integrating by parts on  $(0,T)\times(0,1)$, we have\\

$
\forall (u^0,u^1,g_n) \in L^2(0,1)\times V'\times L^2(0,T,L^2(0,1)),
$
{\small\begin{equation}\label{AQZ}
 \begin{split}
&\Int_0^T\Int_0^{1}g_n(x,t)\psi_n(x,t)\,dx\,dt\\&=
\Int_0^T\Int_\xi^{\xi+\frac{1}{n}}\widetilde{\phi}_n(x,t)u(x,t)\,dx\,dt-
\Int_0^1y^0(x)u^1(x)\,dx +\Int_0^1 y^1(x)u^0(x)\,dx
\\&=n
\Int_0^T\Int_\xi^{\xi+\frac{1}{n}}\phi(x,t)u(x,t)\,dx\,dt-
\Int_0^1y^0(x)u^1(x)\,dx +\Int_0^1 y^1(x)u^0(x)\,dx.
\end{split}
\end{equation}}
Now, we prove that $(\psi_n)$  and $(g_n)$ are  bounded in
$L^{\infty}(0,T,L^2(0,1))$.\\Define
$$
\begin{array}{ll}
K_n: L^2(0,1)\times V'\times L^2(0,T,L^2(0,1))\rightarrow \R\\
\hskip4.1cm (u^0,u^1,g_n)\mapsto n
\Int_0^T\Int_\xi^{\xi+\frac{1}{n}}\phi(x,t)u(x,t)\,dx\,dt.
\end{array}
$$
Using H\"{o}lder inequality, we have
\begin{equation}\label{A}
|K_n(u^0,u^1,g_n)|^2 \leq \Big(n
\Int_0^T\Int_\xi^{\xi+\frac{1}{n}}\Big|\phi(x,t)\Big|^2\,dx\,dt\Big)\Big(
n\Int_0^T\Int_\xi^{\xi+\frac{1}{n}}|u(x,t)|^2\,dx\,dt\Big).
\end{equation}
Replacing (\ref{mama}) in (\ref{A}) and from (\ref{aaaa}), we have
$$
|K_n(u^0,u^1,g_n)|^2 \leq c\Big(\|u^0\|^2_{L^2(0,1)}+\|u^1\|^2_{V'} + \|g_n\|^2_{L^{2}(0,T,L^2(0,1))}
 \Big),
$$
which proves that the linear forms $K_n$ are bounded in $L^2(0,1) \times V' \times L^\infty(0,T,L^2(0,1))$.\\
Therefore, $(\psi_n)_n$ and $(g_n)_n$ are bounded in $L^\infty(0,T,L^2(0,1))$
after extraction of a subsequence of $(\psi_n)_n$ and $(g_n)_n$ still denoted by
$(\psi_n)_n$ and $(g_n)_n$, such that
$$
\psi_n \rightharpoonup \psi\,\,\,\mbox{ weakly* in }\,\,\,L^\infty(0,T,L^2(0,1)),
$$
and
$$
g_n \rightharpoonup g\,\,\,\mbox{ weakly* in }\,\,\,L^\infty(0,T,L^2(0,1)).
$$
The limit of $K_n$ is given  in the following lemma.
\begin{lemma}\label{50}
 The linear forms $K_n$ converge in $ L^2(0,1)\times V \times
L^\infty(0,T,L^2(0,1))$ weakly* to the $K$ defined by
\begin{equation}\label{sv}
K(u^0,u^1,g)=\Int_0^T v(t)u(\xi,t)\,dt,
\end{equation}
where $v\in L^2(0,T)$ and
$$
 \phi(\xi,t)+\frac{1}{2n}
\Frac{\partial \phi}{\partial x}(\xi,t)\rightharpoonup
 v(t)\,\,\,\mbox{weakly* in}\,\,\,\
 H^{-1}(0,T).
$$
\end{lemma}
In order to prove the previous lemma, we  need  the following
result.
\begin{lemma}\label{mm1}
Let $(\phi^0,\phi^1)\in L^2(0,1)\times V'$ and the solution
$\phi(x,t)$ of the problem (\ref{p4o})-(\ref{p6o}) satisfies
\begin{equation}\label{R}
\Int_0^T\Int_\xi^{\xi+\frac{1}{n}} |\phi(x,t)|^2\,dx\,dt =
o\Big(\frac{1}{n}\Big).
\end{equation}
Then, after extraction of a subsequence
$$
\phi(\xi,t)+\frac{1}{2n} \frac{\partial \phi}{\partial x}(\xi,t)
\rightharpoonup  v(t)\,\,\,\mbox{weakly* in}\,\,\,\
  H^{-1}(0,T),
$$ where  $v\in L^{2}(0,T)$.
\end{lemma}
{\bf{Proof of lemma \ref{mm1}.}} In order to prove  lemma \ref{mm1}
we suppose that $w = \Big(\frac{\partial^4}{\partial
x^4}\Big)^{-1}u$ \\
such that $w$ is the solution of
\begin{equation}\label{pp1}
\Frac{\partial ^2 w}{\partial t^2}(x,t)+\Frac{\partial^{4}w}{\partial x^4}(x,t)=
f_n(x,t),\,\,\, 0<x<1,\,\,\,\ t>0,
\end{equation}
\begin{equation}\label{pp2}
w(0,t)= \Frac{\partial w}{\partial x}(1,t)= \Frac{\partial^2
w}{\partial x^2}(0,t)= \Frac{\partial^3 w}{\partial x^3}(1,t)=0,
\end{equation}
\begin{equation}\label{pp3}
w(x,0)=w^{0}(x),\,\,\,\,\ \Frac{\partial w}{\partial
t}(x,0)=w^{1}(x),\,\,\ 0<x<1.
\end{equation}
with initial data
\begin{equation}\label{ggg}
 \left\{
\begin{array}{ll}
w^0=\Big(\frac{\partial^4}{\partial x^4}\Big)^{-1}u^0
\in {\mathcal D}(\partial_x^4)\\
w^1=\Big(\frac{\partial^4}{\partial x^4}\Big)^{-1}u^1\in V\\
f_n=\Big(\frac{\partial^4}{\partial x^4}\Big)^{-1}g_n \in L^2(0,T;{\mathcal D}(\partial_x^4)).
\end{array} \right.
\end{equation}
The trace regularity for (\ref{pp1})-(\ref{pp3}) is given in the
theorem below.
\begin{theorem}\label{h}
 Suppose that $f_n\in L^2(0,T;{\mathcal D}(\partial_x^4))$ and
$(w^0,w^1)\in {\mathcal D}(\partial_x^4)\times V$ the solution $w$
of (\ref{pp1})-(\ref{pp3})
 verifies
\begin{equation}\label{gg}
\Frac{\partial^4 w}{\partial x^4}(\xi,t)\in L^2(0,T),
\end{equation}
and the mapping
\begin{equation}\label{ggg}
\begin{array}{ll}
L^2(0,T;{\mathcal D}(\partial_x^4))\times {\mathcal
D}(\partial_x^4)\times V\rightarrow L^2(0,T)\\ \hskip2.75cm
(f_n,w^0,w^1)\mapsto \Frac{\partial^4 w}{\partial x^4}(\xi,t),
\end{array}
\end{equation}
is linear and continuous.\\
 Furthermore, we have
\begin{equation}\label{mamo}
n\Int_{\xi}^{\xi+\frac{1}{n}}\Int_0^T
\Big|\Frac{\partial^4 w}{\partial x^4}(x,t)\Big|^2\,dx\,dt
 \leq
C\Big(\|f_n\|^2_{L^2(0,T;{\mathcal D}(\partial_x^4))} +
\|w^0\|^2_{{\mathcal D}(\partial_x^4)} + \|w^1\|^2_{V}\Big).
\end{equation}
\end{theorem}
{\bf{Proof of Theorem \ref{h}.}} The proof of (\ref{gg}) and  (\ref{mamo}) can be done by using obvious adaptations of the
proof of (\ref{mi5}) and (\ref{mama}), so it is omitted.\df
 From (\ref{ZZ}) and (\ref{R})
it follows that $\phi(\xi,t)$ is bounded in $F$, after extraction of
a subsequence, $\phi(\xi,t)$ converges in
$L^2(0,T)$ weakly.\\
On the other hand, from (\ref{obser1}), (\ref{aaa}) and
(\ref{aaaa}) we have
 $$\|\phi^0\|_{L^2(0,1)} +
\|\phi^1\|_{V'}=o(n).
$$
Using (\ref{gg}) and (\ref{ggg}) we can easily prove that the mapping
$$
(\phi^0,\phi^1)\in  L^2(0,1)\times V'\rightarrow \Frac{\partial
\phi}{\partial x}(\xi,t) \in H^{-1}(0,T),
$$
is linear and  continuous.\\
Furthermore, we have
$$
\Big\|\Frac{\partial \phi}{\partial x}(\xi,t)\Big\|_{H^{-1}(0,T)} =
o(n).
$$
Now, we prove that $v\in L^{2}(0,T)$ that is
$$
\forall u \in {\mathcal D}(0,T),\,\,\,\,\, |(v,u)|_{{\mathcal
D'}\times {\mathcal D}}\leq c\|u\|_{L^2(0,T)}.
$$
We define the following functions
$$
\Phi(x,t)=\Int_0^t\phi(x,\tau)\,d\tau-\Big(\frac{\partial^4}{\partial
x^4}\Big)^{-1}\phi^1(x),
$$
and
$$
S_n(x,t)= \Int_0^t
\Phi(x,\tau)\,d\tau-\Big(\frac{\partial^4}{\partial
x^4}\Big)^{-1}\phi^0(x).
$$
The functions $\Phi$ and  $S_n$ are solutions of
(\ref{p4o})-(\ref{p6o}) with initial data in $V\times L^2(0,1)$ and
${{\mathcal D}(\partial_x^4)} \times V$
$$
\|\Phi^0\|_{V}+\|\Phi^1\|_{L^2(0,1)}  = o(n),
$$
and
$$
\|S_n^0\|_{{{\mathcal D}(\partial_x^4)} }+\|S_n^1\|_{V} = o(n).
$$
For $u\in {\mathcal D}(0,T)$, we have
{\small\begin{equation*}
 \begin{split}
n\Int_0^T\Int_\xi^{\xi + \frac{1}{n}}\phi(x,t)u(t)\,dx\,dt&=
n\Int_0^T\Int_\xi^{\xi + \frac{1}{n}}S_n(x,t)\frac{\partial^2 u}{
\partial t^2}(t)\,dx\\&= \Int_0^T\Big(S_n(\xi,t)+\frac{1}{2n}
\frac{\partial S_n}{\partial
x}(\xi,t)\Big)\frac{\partial^2u}{\partial t^2}(t)\,dt\\&
+n\Int_0^T\Int_\xi^{\xi+\frac{1}{n}}\frac{\partial^2 u} {\partial
t^2}(t)\Int_\xi^x\Int_\xi^y \frac{\partial^2 S_n}{\partial
z^2}(z,t)\,dz\,dy\,dx\,dt.
\end{split}
\end{equation*}}
Then
\begin{equation}\label{bb}
\Big(S_n(\xi,t)+\frac{1}{2n} \frac{\partial S_n}{\partial
x}(\xi,t),\frac{\partial^2 u}{\partial t^2}(t)\Big)_{{\mathcal
D}',{\mathcal D}}=n\Int_0^T\Int_\xi^{\xi+\frac{1}{n}}
\phi(x,t)u(t)\,dx\,dt-R_n,
\end{equation}
where
$$
R_n=n\Int_0^T\Int_\xi^{\xi+\frac{1}{n}}\frac{\partial^2
u}{\partial t^2}(t)\Int_\xi^x\Int_\xi^y \frac{\partial^2
S_n}{\partial z^2}(z,t)\,dz\,dy\,dx\,dt,
$$
then, we prove that
$$
\Lim_{n\rightarrow \infty}R_n =0.
$$
Using H\"{o}lder's inequality, we have
{\small\begin{equation*}
 \begin{split}
 |R_n|&\leq n\Big\|\frac{\partial^2 u}{\partial t^2}\Big\|_{L^2(0,T)}
 \Big[\Int_0^T\frac{1}{n}\Int_\xi^{\xi+\frac{1}{n}}(x-\xi)
\Int_\xi^x(y-\xi)\Int_\xi^y\Big|\frac{\partial^2 S_n}{\partial
z^2}(z,t)\Big|^2\,dz\,dy\,dx\,dt\Big]^{1/2}\\&
\leq\Frac{1}{\sqrt{8}\sqrt{n}}\Big\|\frac{\partial^2 u}{\partial
t^2}\Big\|_{L^2(0,T)}\|(S_n^0, S_n^1)\|_{{\mathcal
D}(\partial_x^4)\times V}.
\end{split}
\end{equation*}}
Thus
$$
\Lim_{n\rightarrow \infty}R_n =0.
$$
Integrating by part, we get
$$
\Big(S_n(\xi,t)+\frac{1}{2n} \frac{\partial S_n}{\partial
x}(\xi,t),\frac{\partial^2 u}{\partial t^2}(t)\Big)_{{\mathcal
D}',{\mathcal D}} = \Big(\phi(\xi,t)+\frac{1}{2n}\frac{\partial
\phi}{\partial x}(\xi,t),u(t)\Big)_{{\mathcal D}',{\mathcal D}}.
$$
Then
$$
\Big|\Big(\phi(\xi,t)+\frac{1}{2n}\frac{\partial \phi}{\partial
x}(\xi,t),u(t)\Big)_{{\mathcal D}',{\mathcal D}}\Big|\leq
c\|u\|_{L^2(0,T)}+|R_n|.
$$
Passing to the limit as $n$ tends to infinity, we obtain
$$
\Big|(v,u)\Big|\leq c\|u\|_{L^2(0,T)},
$$
which proves that $v$ belongs to $L^2(0,T)$. The proof of lemma
\ref{mm1} is now complete.
\end{proof}
{\bf{Proof of lemma \ref{50}}.}
Passing to the limit in (\ref{AQZ}), we have\\

$
\forall (u^0,u^1,g)\in L^2(0,1)\times V'\times L^2(0,T,L^2(0,1)),
$
\begin{equation*}
\Int_0^T\Int_0^1 g(x,t)\psi(x,t)\,dx\,dt=\Int_0^T v(t)u(\xi,t)\,dt
-\Int_0^1y^0(x)u^1(x)\,dx+\Int_0^1 y^1(x)u^0(x)\,dx,
\end{equation*}
where $u$ is the solution of (\ref{p1})-(\ref{p3}) and $\psi$ is the solution of (\ref{p44oo})-(\ref{p77oo}).\\
 Since the linear form $K$ defined in (\ref{sv}) is meaningful  on $L^2(0,1)\times V'\times L^2(0,T,L^2(0,1))$, it is sufficient to prove that $(K_n)_n$  converges to $K$ on a dense subspace of $L^2(0,1)\times V'\times L^2(0,T,L^2(0,1))$ and, for example,   we consider
 $(u^0,u^1,g_n)\in   L^2(0,1)\times V'\times L^2(0,T,L^2(0,1))$
 $$
 \begin{array}{ll}
 L_n: L^2(0,T, {\mathcal D}(\partial_x^4))\rightarrow \R\\
 \hskip3.50cm u \mapsto n\Int_0^T \Int_\xi^{\xi+
 \frac{1}{n}} \phi(x,t)u(x,t)\,dx\,dt,
 \end{array}
 $$
 are defined and bounded on $L^2(0,T,{\mathcal D}(\partial_x^4))$.\\
 They converge for the weak topology $L^2(0,T,({\mathcal D}(\partial_x^4))')$ to an element $L$ of
 $L^2(0,T,({\mathcal D}(\partial_x^4))')$. In order to determine $L$, we write for $u\in {\mathcal D}(0,T;C^\infty(0,1))$:
 {\small\begin{equation*}
\begin{split}
L_n(u)&= n\Int_0^T\Int_\xi^{\xi+
 \frac{1}{n}}\phi(x,t)u(\xi,t)\,dx\,dt
 \\&+n\Int_0^T \Int_\xi^{\xi+
 \frac{1}{n}}\phi(x,t)\Big(\Int_\xi^x
 \frac{\partial u}{\partial y}(y,t)\,dy\Big)\,dx\,dt.
\end{split}
\end{equation*}
We have already seen that
 $$
 \lim_{n\rightarrow \infty}n\Int_{\xi}^{\xi +
 \frac{1}{n}}\Int_0^T \phi(x,t)u(\xi,t)\,dx\,dt = \Int_0^T  v(t)u(\xi,t)\,dt.
 $$
On the other hand, it is easy to prove, using H\"{o}lder inequality, that for every $u_n\in {\mathcal D}(0,T;C^\infty(0,1))$, we have
$$
 \lim_{n\rightarrow \infty}n\Int_\xi^{\xi+
 \frac{1}{n}}\Int_0^T \phi(x,t)\Big(\Int_\xi^x
 \frac{\partial u}{\partial y}(y,t)\,dy\Big)\,dx\,dt = 0.
 $$
This completes the proof.\df
\begin{remark}
By the same method  we can obtain the pointwise controllability
of the  Kirchhoff beam equation
\begin{equation*}
\Frac{\partial ^2 u}{\partial
t^2}(x,t)-\Frac{\partial^{4}u}{\partial t^2\partial x^2}(x,t)+
\Frac{\partial^{4}u}{\partial x^4}(x,t)= v(t)\delta_{\xi},\,\,\,
0<x<1,\,\,\,\ t>0,
\end{equation*}
\begin{equation*}
u(0,t)= \Frac{\partial u}{\partial x}(1,t)= \Frac{\partial^2
u}{\partial x^2}(0,t)= \Frac{\partial^3 u}{\partial x^3}(1,t)=0,
\end{equation*}
\begin{equation*}
u(x,0)=u^{0}(x),\,\,\,\,\ \Frac{\partial u}{\partial
t}(x,0)=u^{1}(x),\,\,\ 0<x<1,\end{equation*}
as a limit of internal exact controllability of
\begin{equation*}
\Frac{\partial ^2 u}{\partial t^2}(x,t)-\Frac{\partial^{4}u}{\partial t^2\partial x^2}(x,t)+
\Frac{\partial^{4}u}{\partial x^4}(x,t)=
g_n(x,t),\,\,\, 0<x<1,\,\,\,\ t>0,
\end{equation*}
\begin{equation*}
u(0,t)= \Frac{\partial u}{\partial x}(1,t)= \Frac{\partial^2
u}{\partial x^2}(0,t)= \Frac{\partial^3 u}{\partial x^3}(1,t)=0,
\end{equation*}
\begin{equation*}
u(x,0)=u^{0}(x),\,\,\,\,\ \Frac{\partial u}{\partial
t}(x,0)=u^{1}(x),\,\,\ 0<x<1,
\end{equation*}

\end{remark}



\end{document}